\newcommand\ds{\displaystyle}                    
\newcommand\ts{\textstyle}                       
\newcommand\mb[1]{\mathbb{#1}}                   
\newcommand\mc[1]{\mathcal{#1}}                  
\newcommand\tendsto[1]{\xrightarrow[#1]{ }}      
\newcommand\ff[1]{^{\underline{#1}}}             
\newcommand\rf[1]{^{\overline{#1}}}              
\newcommand\ol[1]{\overline{#1}}                 
\newcommand\Pb{P}                       
\newcommand\Ex{E}                       
\newcommand\Va{V}                       
\newcommand\Beta{B}                     
\DeclarePairedDelimiter\lr{\lparen}{\rparen}     
\DeclarePairedDelimiter\lrb{\lbrack}{\rbrack}    
\DeclarePairedDelimiter\abs{\lvert}{\rvert}      
\DeclarePairedDelimiter\cl{\lceil}{\rceil}       
\DeclarePairedDelimiter\fl{\lfloor}{\rfloor}     
\newtheorem{theorem}{Theorem}
\newtheorem{corollary}[theorem]{Corollary}
\newtheorem{lemma}[theorem]{Lemma}
\theoremstyle{remark}
\newtheorem{remark}{Remark}
\author{Kevin Durant
  \and Stephan Wagner\thanks{Supported by the National Research Foundation of South Africa, grant 96236.}}
\title{On the centroid of increasing trees}
\affiliation{
  Department of Mathematical Sciences, Stellenbosch University, South Africa
}
\keywords{centroid, increasing tree, limit distribution}
\begin{document}
\publicationdetails{21}{2019}{4}{8}{4325}
\maketitle
\begin{abstract}
A centroid node in a tree is a node for which the sum of the distances to all other nodes attains its minimum, or
equivalently a node with the property that none of its branches contains more than half of the other nodes.
We generalise some known results regarding the behaviour of centroid
nodes in random recursive trees (due to Moon) to the class of
very simple increasing trees, which also includes the families of plane-oriented
and $d$-ary increasing trees. In particular, we derive limits of distributions
and moments for the depth and label of the centroid node nearest to the root, as
well as for the size of the subtree rooted at this node.
\end{abstract}

\section{Introduction}
\label{sec:intro}

Increasing trees are rooted, labelled trees in which paths away from the root
are labelled in increasing order. Apart from their practical relevance to
problems in computer science (see \citet{bergeron92varieties} for a useful
summary), they possess a large structural variety that is especially interesting
from a combinatorial point of view. This variety stems from a weighting scheme
in which a tree's nodes are assigned weights according to their out-degrees
(that is, their number of children) and the weight of the tree is defined to be
the product of those of its nodes. Each set of weights gives rise to a different
family of trees,~$\mc{T}$, which may emphasise or suppress certain structural
features within the class; say by restricting the possible out-degrees of nodes,
or by assigning significance to the order of their children.

Given such a family of trees, one can pose combinatorial questions in terms of
the subset $\mc{T}_n \subset \mc{T}$ of trees of size~$n$ (that is, the set of
trees made up of $n$~nodes), from which trees are drawn at random according to
their relative weights. For example, one might be interested in the sum of the
weights of the trees in this set~\citep{bergeron92varieties}, the height and
profile of a typical tree~\citep[Chapter~6]{drmota09random}, or the asymptotic
distribution of the number of subtrees of a fixed size over all elements
of~$\mc{T}_n$~\citep{fuchs12limit}.

Our interest here is in centroid nodes of large, random increasing trees. We say
that a node~$v$ in a graph is a \emph{centroid} if it minimises the average
distance to any other node in the graph (as opposed to a \emph{central point} or \emph{centre},
which minimises the maximum). An equivalent definition for trees can be given in terms of branches.
The \emph{branches} of a tree~$T$ at node~$v$ are the maximal
subtrees of~$T$ that do not contain~$v$. We will also use the shorthand
``branches of~$T$'' for the branches of~$T$ at its root node. The members of the
branch of~$v$ containing the root are \emph{ancestral} nodes, while members of
the remaining branches are \emph{descendants}.
It is well known that $v$ is a centroid if each of the tree's branches
at~$v$ contains at most half of the tree's nodes~\citep{zelinka68medians}.

Furthermore, every tree has either one or
two centroids---however the latter case only occurs when the size~$n$ of the
tree is even, the centroid nodes are adjacent, and the largest branch of each
has exactly $n/2$ nodes (essentially, when there is a centroid
edge), see~\cite{jordan69assemblages}.

Specifically, the probability of a random increasing tree having two centroids
vanishes at a rate of~$\Theta(1/n)$ as $n$ tends to infinity (see
Lemma~\ref{lem:E_n(U_m)}), so it seems reasonable to focus solely on parameters
that involve the first centroid---i.e., the one closest to the tree's root. All
of the parameters we consider here---depth, label, and number of
descendants---lead to random variables over the set of trees of size~$n$, and as
$n \to \infty$, these random variables converge to limits that can be described
in full.

Before we can summarise these or any other known results properly, we must
highlight the subclass of \emph{very simple increasing trees}, which are
particularly interesting (from a combinatorial point of view, at least) because
they can be described and characterised using a straightforward growth process:
begin with the root node~$1$, and repeatedly attach nodes to the tree according
to certain probabilistic rules, determined by the family's out-degree weights.
The simplest such family is that of recursive trees, in which trees are formed
by attaching new nodes uniformly at random to existing ones.

\subsection{New and existing results}
\label{sec:intro results}

For the family of recursive trees, \citet{moon02centroid} has presented a number
of noteworthy results that detail both the depth and label of the centroid lying
nearest to the root node. Similar results on simply generated trees can be found in 
two papers by Meir and Moon dedicated to the topic of centroid nodes
\citep{meir02centroid,moon85expected}. Letting $M =\fl{(n-1)/2}$, it turns out
that the average depth of the centroid in a
recursive tree of size~$n$ is $M/(n-M)$, and the expected label of the centroid
is:
\begin{equation}\label{eqn:moon label}
  \frac12 + \frac{n(n+1)}{2(n-M)(n+1-M)}.
\end{equation}
It follows that as $n$ tends to infinity, the mean depth and label of the
nearest centroid approach $1$ and~$5/2$ respectively---an early indication of
the strong correlation between the root and centroid nodes of an increasing
tree.

Indeed, Moon also showed that the probability that the centroid and root
coincide tends to $1-\ln2 \approx 0.31$ as $n \to \infty$ (along with the
limiting probability for an arbitrary node; see Corollary~\ref{cor:P(L = k)}
below), and that the probability of the centroid having depth at least~$h$ tends
to $(\ln2)^h/h!$. Furthermore, it was shown that the probability that the
ancestral branch of the centroid has $B$~nodes is:
\begin{equation}\label{eqn:moon subtree}
  \frac{n}{(n-B)(n-B+1)}\lr*{1 - \sum_{b=\cl{(n+1)/2}}^{\fl{n-B-1}} 1/h},
\end{equation}
and that the mean of the proportion of the tree accounted for by this branch
approaches $(\ln2)^2/2 \approx 0.24$.

Although exact finite expressions such as~\eqref{eqn:moon label}
and~\eqref{eqn:moon subtree} do not seem to be within reach for the broader
class of very simple increasing trees, we will show here that the behaviour of
the centroid can be described both precisely and in some generality in the limit
$n \to \infty$. In particular, our theorems will be formulated in terms of
limiting distributions and limits of moments, and all of the above (asymptotic)
results will follow from them as special cases. This focus on asymptotic
behaviour and full limiting distributions is one of two notable differences
between our work and that of Moon, the other being methodological: whereas the
above results were obtained mostly using elementary counting arguments, our
exposition relies on generating functions, singularity analysis, and a variety
of other tools from the field of analytic combinatorics.

Briefly, our results are as follows: the depth of the nearest centroid in a
large very simple increasing tree converges to a discrete limiting distribution
that is concentrated around the root, i.e., one that decreases exponentially for
larger integer values. The associated moments converge as well, and we find, for
example, that the expected depth of the centroid in a random plane-oriented or
binary increasing tree tends to $1/2$ and~$2$ respectively.

Similarly, the label of the centroid converges to a discrete limiting
distribution that favours values close to~$1$. The mean label of the centroid in
plane-oriented and binary trees tends to $7/4$ and~$4$, respectively, and it
will follow from the limiting distribution of the centroid's label that the
probability of the root and centroid coinciding tends to $0.59$ and~$0$ in these
two families. In all of these results it is noticeable that the root is further
from the centroid in a binary increasing family than in any other type of
increasing tree.

Lastly, we show that the limiting distribution for the proportion of the tree
accounted for by the centroid's ancestral branch is a combination of a point
measure at~$1$ and a decreasing density on $[1/2, 1)$. Its expected value
approaches (roughly) $0.13$ and~$0.38$ in plane-oriented and binary increasing
trees respectively. Several of these results are partially recognisable in
Figure~\ref{fig:recursive}.

The remainder of this paper is organised as follows: we describe families of
very simple increasing trees in more detail in Section~\ref{sec:trees}, and then
address the depth, label, and subtree size of the centroid in
Sections~\ref{sec:depth}, \ref{sec:label}, and~\ref{sec:root branch},
respectively.

As a final introductory note, we should mention that there are several
interesting results involving centroids of another, similar class of random
trees---the so-called \emph{simply generated} families. Like increasing trees
these are rooted, weighted trees, but without the additional restriction on
paths leading away from the root. By design, both plane (Catalan) and labelled
(Cayley) trees can be seen as simply generated families.

In particular, it is known that the centroid of a large simply generated tree
typically has exactly three branches of
size~$\Theta(n)$~\citep[Theorem~4]{aldous94recursive}, and that in the limit $n
\to \infty$ each of these branches itself behaves like a scaled, random simply
generated tree. Furthermore, the proportion of the tree accounted for by the
centroid's ancestral branch approches~$0.41$, while the proportions for the
remaining two branches tend to~$0.44$ and~$0.15$~\citep{meir02centroid}.
Remarkably, these results apply to the class of simply generated trees as a
whole, because all simply generated families share the same limiting object: the
continuum random tree~\citep{aldous91continuum2}.

On the other hand, there are also a number of results that describe the
behaviour of a complementary centrality measure, \emph{betweenness centrality},
within these two classes of trees. The betweenness of a node in a tree is the
number of paths passing through that node, and one can view it as an alternative
to the average distance metric that underlies the definition of a
centroid. In fact the average distance from a node to any other in a
graph also gives rise to a centrality measure, known simply as (inverse)
\emph{closeness centrality}. From this perspective, a centroid is simply a node
with maximal closeness. Both of these measures have proved useful in practical
applications~\citep{girvan02community,goh2002classification,shah11rumors}.

One finds, for example, that betweenness centrality must be rescaled linearly to
obtain limiting distributions over families of both simply generated and
increasing trees, implying that betweenness is typically~$\Theta(n)$ in both
classes. (A notable exception is that nodes close to the root of an increasing
tree tend to have betweenness that is $\Theta(n^2)$.) Moreover, the probability
that the centroid also has maximal betweenness approaches~$0.62$ and~$0.87$ in
labelled (simply generated) and recursive (increasing) trees
respectively~\citep{durant17centrality,durant17betweenness}.

\begin{figure}[p]
  \centering
  \includegraphics[width=.7\paperwidth]{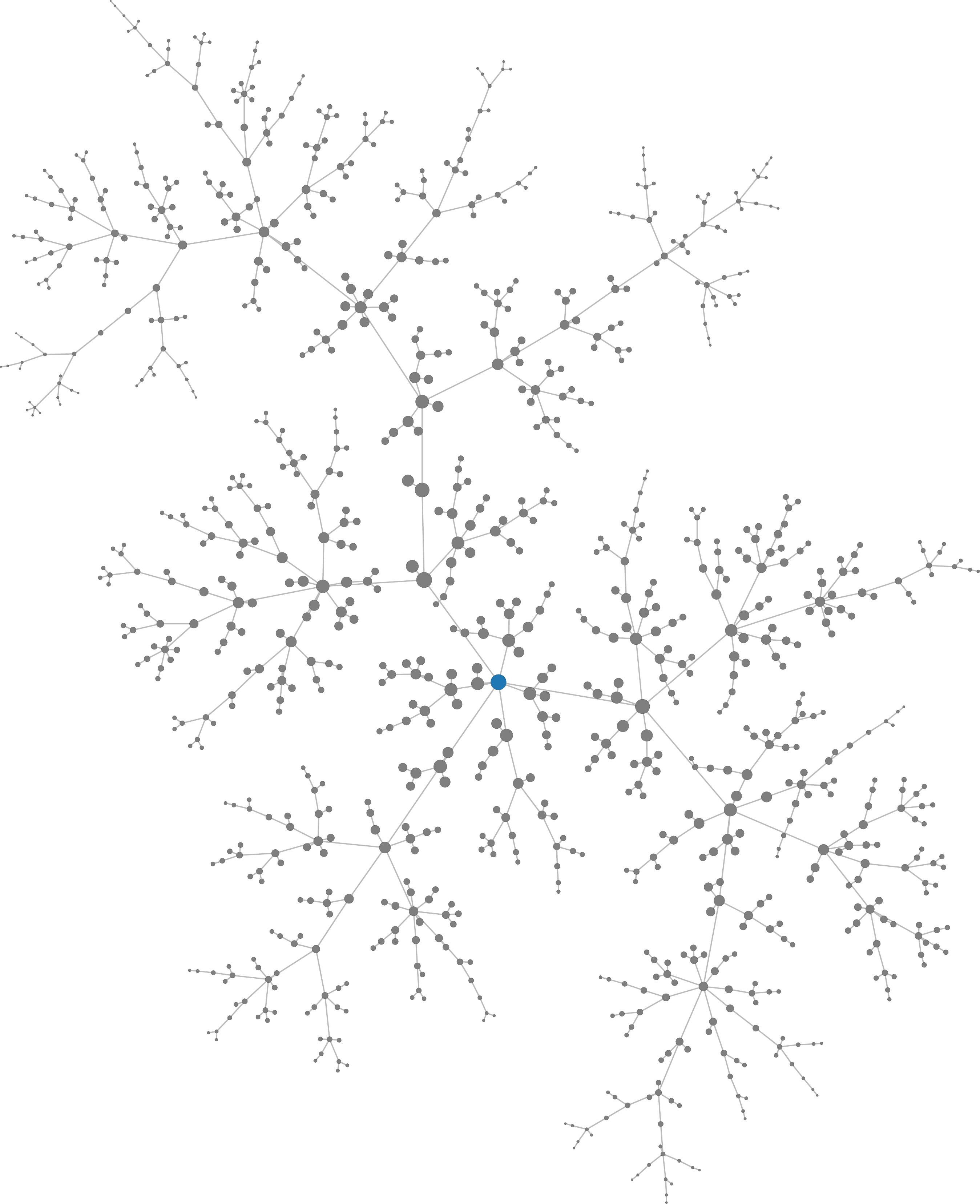}
\caption{A random recursive tree with~$n = 1000$ nodes. The larger a node,
the smaller its average distance to other nodes, and the centroid is depicted in
blue. In this particular example, the label of the centroid is~$2$, and the root
is the node of degree~$4$ directly to its right. (This figure can also be
compared to Figures~1 and~2 of~\citep{durant17centrality}.)}
\label{fig:recursive}
\end{figure}

\section{Very simple increasing trees}
\label{sec:trees}

As mentioned above, increasing trees are rooted, labelled trees that satisfy a
certain increasing property. One can state this property recursively: each of an
increasing tree's branches is itself an increasing tree whose labels are all
larger than that of the root.

In addition, specific families of increasing trees are defined concretely by
coupling a non-negative weight~$\phi_i$ to each node according to its
out-degree~$i$, and then letting the weight~$\omega(T)$ of the tree be the
product of the per-node weights. It is typical to assume $\phi_0 \ne 0$, and
$\phi_i > 0$ for some $i \ge 2$~\citep{bergeron92varieties}. Trees are understood
to be plane in this context, i.e., the order of the children of a node matters. However,
for some choices of weights (notably, the choice $\phi_i = \frac{1}{i!}$, which yields
recursive trees, see the discussion later), it is more natural to interpret the resulting
family of weighted trees as non-plane.

With these concepts in mind, one can construct a generating function for a
family~$\mc{T}$ of increasing trees. We let $y_n$ denote the total weight
of increasing trees in~$\mc{T}$ with $n$ nodes. The exponential
generating function will be denoted by $y(x)$:
\begin{equation*}
y(x) = \sum_{n \geq 1} \frac{y_n}{n!} x^n.
\end{equation*}
Recall that the symbolic act of
removing the node with the lowest label from every object in a class is
represented analytically by the differential operator $y(x) \rightarrow
y'(x)$~\citep[Section~VII.9.2]{flajolet09analytic}. We have:
\begin{equation}\label{eqn:y'(x) vs}
  y'(x) = \sum_{T\in\mc{T}} \omega(T) \frac{x^{\abs{T}-1}}{(\abs{T}-1)!}
      = \phi(y(x)),
\end{equation}
in which \begin{math}\phi(u) = \sum_i \phi_i u^i\end{math} is the family's characteristic weight
function. Note that increasing trees are labelled structures, so
all of the generating functions we make use of here will be \emph{exponential}
generating functions.

The so-called very simple increasing trees are a subclass of the increasing
trees defined in this way, and are important for two reasons: one, because the
most familiar examples of increasing trees fall into this category; and two,
because every family of very simple increasing trees can be characterised by a
few simple, useful properties.

\begin{lemma}[{\citet[Lemma~5]{panholzer07level}}]\label{lem:panholzer07}
Let $\mc{T}$ be a family of increasing trees; then $\mc{T}$ is a family of
\emph{very simple} increasing trees if the following (equivalent) properties
hold:
\begin{itemize}
  \item the total weight~$\abs{\mc{T}_n}$ of trees of size~$n$, also denoted
    by~$y_n$, satisfies \begin{math}y_{n+1}/y_n = c_1n + c_2\end{math} for certain $c_1, c_2 \in
    \reals$;
  \item repeatedly pruning the node with the largest label from a random tree 
in~$\mc{T}$ yields another, smaller, random tree; and
  \item trees can be constructed by way of a probabilistic growth process,
where the $n$-th node is attached to one of the previous $n-1$ nodes in the
$n$-th step, and the probability of being attached to a given node is proportional
to a linear function of its outdegree.
\end{itemize}
\end{lemma}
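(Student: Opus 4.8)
The plan is to route all three properties through a single structural condition on the weight sequence. Define the \emph{insertion weight} of a node of out-degree $i$ by
\[
  g_i := (i+1)\,\phi_0\,\frac{\phi_{i+1}}{\phi_i},
\]
which records the total weight gained by attaching a new leaf to such a node: the node's out-degree rises from $i$ to $i+1$ (changing its factor from $\phi_i$ to $\phi_{i+1}$), the fresh leaf contributes a factor $\phi_0$, and there are $i+1$ plane positions into which the leaf may be inserted. Because the largest label in an increasing tree always sits at a leaf, every tree in $\mc{T}_{n+1}$ is obtained uniquely from a tree $T \in \mc{T}_n$ by one such insertion, so that
\[
  y_{n+1} = \sum_{T \in \mc{T}_n} \omega(T)\,W(T), \qquad W(T) := \sum_{v \in T} g_{d(v)},
\]
where $d(v)$ is the out-degree of $v$. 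This insertion identity is equivalent to the functional equation~\eqref{eqn:y'(x) vs} and is the common starting point; the central condition to which I will reduce everything is that $g_i$ is an \emph{affine} function of $i$ on the support of $\phi$.

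First I would reformulate the second and third properties in terms of $g$. Running the growth process backwards, pruning the largest label from a weighted-random tree of $\mc{T}_{n+1}$ returns a fixed $T \in \mc{T}_n$ with probability proportional to $\omega(T)\,W(T)$; this matches the weighted-random law $\omega(T)/y_n$ on $\mc{T}_n$ exactly when $W(T)$ takes the same value for every tree of a given size. Thus the pruning property holds if and only if $W(T)$ is independent of the shape of $T$, for every $n$. The forward process, in turn, attaches the new node to a given node with probability proportional to $g_{d(v)}$, so ``proportional to a linear function of the out-degree'' is by definition the assertion that $g_i$ is affine. It then remains to show that these two reformulations coincide with each other and with the first property.

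The combinatorial heart of the argument is the claim that $W(T) = \sum_v g_{d(v)}$ is constant over each $\mc{T}_n$ if and only if $g_i$ is affine. One direction is immediate: since every tree on $n$ nodes satisfies $\sum_v d(v) = n-1$, an affine $g_i = \alpha i + \beta$ gives $W(T) = \alpha(n-1) + \beta n$, depending on $n$ alone. For the converse I would compare trees of a common size whose degree sequences differ by moving one unit of out-degree from one node to another; constancy of $W$ across every such move forces the second difference $g_{i+1} - 2g_i + g_{i-1}$ to vanish, hence $g$ affine. I expect the main obstacle to lie precisely here, in the bounded-degree families (such as $d$-ary trees, where $\phi_i = 0$ for $i > d$): one must verify that enough distinct degree sequences are realisable as genuine increasing trees to determine $g$ on the entire support, and treat the endpoints of the support with care.

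Finally I would connect the first property to affinity of $g$ through the functional equation. Multiplying the recursion $y_{n+1} = (c_1 n + c_2)\,y_n$ by $x^n/n!$ and summing turns it into the linear differential equation $(1 - c_1 x)\,y'(x) = \phi_0 + c_2\,y(x)$; combining this with $y'(x) = \phi(y(x))$ and the substitution $\mathrm{d}x = \mathrm{d}y/\phi(y)$ yields the separable equation $\phi'(y)\,(\phi_0 + c_2 y) = (c_1 + c_2)\,\phi(y)$, whose unique solution with $\phi(0) = \phi_0$ is
\[
  \phi(u) = \phi_0\Bigl(1 + \tfrac{c_2}{\phi_0}\,u\Bigr)^{(c_1+c_2)/c_2}
\]
(degenerating to $\phi_0\,e^{c_1 u/\phi_0}$ as $c_2 \to 0$). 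A direct expansion shows that for exactly these weight functions one has $g_i = (c_1 + c_2) - c_2\,i$, which is affine, and conversely that affinity of $g$ forces $\phi$ into this one-parameter family. This identifies the first property with the central condition, closing the cycle; along the way the constants reconcile as $c_1 = \alpha + \beta$ and $c_2 = -\alpha$, confirming that the three descriptions pick out the same family of trees.
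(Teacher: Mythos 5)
You should first note that the paper itself contains no proof of this statement: Lemma~\ref{lem:panholzer07} is imported verbatim, with citation, from Lemma~5 of \citet{panholzer07level}, so there is no internal argument to compare yours against. Your reconstruction does follow what is essentially the standard route for this result: encode a single leaf-insertion by the weights $g_i = (i+1)\phi_0\phi_{i+1}/\phi_i$, reduce all three properties to affinity of $g$, and recover the characteristic functions from the separable equation $\phi'(y)(\phi_0 + c_2 y) = (c_1+c_2)\phi(y)$, which you derive correctly from $y'(x) = \phi(y(x))$ and the recursion $y_{n+1} = (c_1 n + c_2)y_n$. Your quantitative conclusions are corroborated by what the paper uses downstream: $g_i = c_1 + c_2 - c_2 i$ is exactly the proportionality factor quoted in Section~\ref{sec:trees}, and $W(T) = c_1 n + c_2$ for a tree of size $n$ matches the normalisation $(n-1)c_1 + c_2$ there; your consistency checks for the three standard families ($g_i = 1$, $g_i = i+1$, $g_i = d-i$) are all right.

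There is, however, a genuine gap beyond the realisability issue you do flag. Your insertion identity $y_{n+1} = \sum_{T\in\mc{T}_n}\omega(T)W(T)$, and with it the claimed equivalence between the pruning property and constancy of $W$ on $\mc{T}_n$, requires the support of $\phi$ to be downward closed: pruning the largest label lowers its parent's outdegree from $d+1$ to $d$, and if $\phi_d = 0$ while $\phi_{d+1} \ne 0$ the pruned tree has weight zero even though the original did not, so the bookkeeping $\omega(T') = \omega(T)\,\phi_0\phi_{d+1}/\phi_d$ collapses. Concretely, take $\phi(u) = 1 + u^3$, which is admissible under the paper's standing assumptions ($\phi_0 \ne 0$, $\phi_i > 0$ for some $i \ge 2$): then $g_0 = g_3 = 0$, so $g$ is affine on the support and $W \equiv 0$ is constant on every $\mc{T}_n$, yet all three properties of the lemma fail --- one has $y_2 = y_3 = 0$ but $y_4 = 6$, so $y_{n+1}/y_n$ cannot equal $c_1 n + c_2$; pruning exits the family; and no single-insertion growth process can exist. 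Hence your hub condition ``$g$ affine on the support'' is strictly weaker than each of the three properties, and in particular the implication from it back to property one breaks: with gaps in the support, the recursion $(i+1)\phi_0\phi_{i+1} = g_i\phi_i$ no longer pins $\phi$ to the binomial/exponential family. The repair is short but must be made explicit: show first that each of the three properties forces the support of $\phi$ to be $\{0,1,2,\dotsc\}$ or $\{0,\dotsc,d\}$ --- for the first property this falls out of your explicit solution of the differential equation, and for the other two it follows because the dynamics build every positive-weight tree through positive-weight intermediates by single leaf insertions --- and only then run your cycle. By contrast, the gap you yourself flag (the exchange argument) is benign once downward-closedness is in hand: every multiset of outdegrees from $\{0,\dotsc,d\}$ summing to $n-1$ is realisable as an increasing tree, so local moves of one unit of outdegree do force the second differences of $g$ to vanish throughout the support, and the endpoint value $g_d = 0$ for $d$-ary families is automatically consistent with affinity.
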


More directly, families of very simple increasing trees can be identified by
their characteristic functions, which always correspond to one of the following:
\begin{itemize}
  \item \emph{general recursive} trees:
    \begin{equation*} \phi(u) = \exp(c_1u), \text{ with } c_1 > 0; \end{equation*}
  \item \emph{general plane-oriented} trees:
    \begin{equation*} \phi(u) = (1 + c_2u)^{1+c_1/c_2}, \text{ with } c_2 < 0 \text{ and }
        c_1/c_2 < -1; \end{equation*}
  \item \emph{general $d$-ary increasing} trees:
    \begin{equation*} \phi(u) = (1 + c_2u)^{1+c_1/c_2}, \text{ with } c_2 > 0 \text{ and }
        c_1/c_2 \in \mb{Z}_{>0}. \end{equation*}
\end{itemize}
Note that in the above characteristic functions we have implicitly assumed that
the weight assigned to a leaf node is $1$, since the coefficient of $u^0$
in~$\phi(u)$---which we write as $[u^0]\phi(u)$---equals~$1$ in all three of
them. We have called these families `general' because each one has a certain
standard form, corresponding to certain fixed values of~$c_1$ and~$c_2$. These
are:
\begin{itemize}
  \item \emph{recursive} trees ($c_1 = 1$):
    \begin{equation*} \phi(u) = \exp(u) \implies y(x) = -\log(1-x); \end{equation*}
  \item \emph{plane-oriented} trees ($c_2 = -1$ and $c_1 = 2$):
    \begin{equation*} \phi(u) = (1-u)^{-1} \implies y(x) = 1 - \sqrt{1-2x}; \end{equation*}
  \item \emph{$d$-ary increasing} trees ($c_2 = 1$ and $c_1 = d-1 \in
    \mb{Z}_{>0}$):
    \begin{equation*} \phi(u) = (1+u)^d \implies y(x) = -1 + (1 - (d-1)x)^{-1/(d-1)}. \end{equation*}
\end{itemize}
Throughout the next few sections we will work with the more general forms, only
referring to specific cases for the sake of intuition or examples.

The probabilistic growth process mentioned in Lemma~\ref{lem:panholzer07} was
described briefly for the case of recursive trees in Section~\ref{sec:intro
results}, but is worth repeating in more detail here. For recursive trees, the
process starts with a root node---always labelled~$1$---and at each step,
node~$n$ is attached to one of the $n-1$ previous nodes, uniformly at random.
The tree obtained after the $n$th step is random (relative to its weight) in
$\mc{T}_n$. Clearly, the number of recursive trees of size~$n$ satisfies \begin{math}y_n =
(n-1) y_{n-1} = (n-1)!\end{math}.

The processes for plane-oriented and $d$-ary increasing trees are similar, but
with attachment probabilities that depend on the out-degrees of the existing
nodes. Firstly, in the plane-oriented case, a node with~$m$ children is viewed
as having $m+1$ distinct attachment points. Since there are a total of $2n-1$
such points in a tree of size~$n$, the number of plane-oriented trees is \begin{math}y_n =
(2n-3)y_{n-1} = (2n-3)!!\end{math}. Secondly, the defining characteristic of a $d$-ary
tree is that each of its nodes starts with $d$ attachment points, so that \begin{math}y_n =
((d-1)n + 1)y_{n-1}\end{math}. Note that these counts all abide by the general rule \begin{math}y_n
= (c_1n + c_2)y_{n-1}\end{math} of Lemma~\ref{lem:panholzer07}.

Generally, the probability of attaching node $n$ to one of the previous $n-1$ nodes
whose outdegree is $k$ is equal to \begin{math}(c_1 + c_2 - c_2k)/((n-1)c_1+c_2)\end{math}. Note
that this quotient only depends on $c_2/c_1$ rather than the precise values of $c_1$
and $c_2$, and that the proportionality factor $c_1+c_2 - c_2 k$ is independent of $n$.

For recursive trees, the probability simplifies to $1/(n-1)$ for all nodes; for plane-oriented
trees, we have \begin{math}c_1 + c_2 - c_2k = k+1\end{math}, so that the probability of attaching a new node to an existing
node increases with the number of children that node already has (so-called preferential
attachment). For $d$-ary trees, we have \begin{math}c_1 + c_2 - c_2k = d-k\end{math}, which is decreasing in $k$
and becomes $0$ for $k = d$, so that no more nodes can be attached once a node reaches full capacity.

\begin{remark}\label{rem:independence}
Since the probability that the new node $n$ is attached to a specific earlier node only depends on $n$ and the current
outdegree, but not the shape of the rest of the tree, it follows that the size and shape of the subtree rooted at $k$ are
independent of the history, i.e., the shape of the tree induced by the first $k$ nodes. This fact will be used a few times
in our arguments.
\end{remark}

\subsection{A common generating function} 

The standard expressions for the generating function~$y(x)$ that we have
mentioned above are all quite familiar and amenable to analysis, and this is
true of the generating functions for the more general families of very simple
increasing trees as well. However, for our purposes, it will be somewhat more
concise to work with the derived generating function~$y'(x)$, which has a
common, manageable form for all families:
\begin{equation}\label{eqn:y'(x)}
    y'(x) = (1 - c_1x)^{-(1+(c_2/c_1))}, \text{ with } c_1 \ne 0.
\end{equation}
This expression is not only a necessary property of very simple families, but a
sufficient one as well. To see this, note that it is usual to assume that $y_n
\ge 0$ for $n > 0$, along with $y_0 = 0$. Since \begin{math}y_n = \prod_{j=1}^{n-1} (c_1j +c_2)\end{math},
this is the case only if $c_1 > 0$ and $1 + c_2/c_1 > 0$, which
corresponds to general recursive and plane-oriented trees when $c_2 = 0$ and
$c_2 < 0$, respectively.

Another typical assumption is that \begin{math}\phi_i = [u^i]\phi(u) \ge 0\end{math} for $i > 0$,
with $\phi_0 > 0$ and $\phi_i > 0$ for some $i \ge 2$. This can also be applied
here, since by integrating $y'(x)$ when $c_2 > 0$ (the constant is determined
using $y_0 = 0$) it follows that \begin{math}y'(x) = \phi(y(x)) = (1 +
c_2y(x))^{1+(c_1/c_2)}\end{math}. This characteristic function satisfies the assumption
only if $c_1/c_2 \in \mathbb{Z}_{\ge0}$, although the case $c_1/c_2 = 0$ is
generally excluded to avoid the family of `path' trees.

Thus it is possible to derive results that are specific to very simple
increasing trees by working with the derived form~\eqref{eqn:y'(x)} and assuming
the non-negativity of the~$y_n$ and~$\phi_i$. That being said, our results will
remain unaffected even if we weaken the constraints on~$y_n$ and~$\phi_i$
slightly, so for the sections to come we define~$\alpha$ and assume that:
\begin{equation*} y'(x) = (1 - c_1x)^{-\alpha},
    \text{ with } \alpha = 1 + \frac{c_2}{c_1} > 0. \end{equation*}
In particular, we can write \begin{math}y_n = c_1^{n-1} \alpha\rf{n-1}\end{math}, in which
the latter term is a rising factorial. Recursive, plane-oriented, and $d$-ary
increasing trees now correspond to $\alpha = 1$, $\alpha = 1/2$, and $\alpha =
d/(d-1)$ respectively (for binary increasing trees, $\alpha = 2$).

\section{The depth of the centroid} 
\label{sec:depth}

The remaining sections deal with the depth, label, and subtree size of the
centroid, respectively. Intuitively, one can expect the root node to play a
large part in this analysis, because the branches of a random increasing tree
are typically very well balanced. To be more specific: the expected height of a
random very simple increasing tree
is~$\Theta(\log n)$~\cite[Chapter~6]{drmota09random}, and the depths of its
nodes follow a normal distribution with both mean and variance of
order~$\log n$~\citep{bergeron92varieties}. (Recall, e.g., that a strict binary
tree of size~$n$ has a height of \emph{at least} $\Theta(\log n)$.) This implies that
the average distance from the root to other nodes in the tree remains relatively
small, leading one to surmise that the centroid will lie at least somewhat close
to the root.

The starting point for all of our results is the observation that in a tree of
size~$n$, the node with label~$k$ is on the path between the root and the
centroid (always considering the nearer if there are two) if and only if $k$ has
at least $\fl{n/2}$ descendent nodes \citep{moon02centroid}. We let
$\Lambda_k(1/2)$ mark the occurrence of this event (later, we will also introduce a more
general version $\Lambda_k(\sigma)$). Our first goal will be to
derive a closed form for the probability~$\Pb_n(\Lambda_k(1/2))$, which, due to
its reliance on the label~$k$, will be obtained via the exponential generating
function~$y^{(k)}(x)$. By combining this closed form with a known probability
generating function for the depth of node $k$, we can describe the event that
$k$ is both at depth~$h$ and on the path from the root to the centroid, and by
marginalising over $k$, we arrive at a generating function that yields, in the
limit $n \to \infty$, the behaviour of the depth of the (nearest) centroid node.

\subsection{The probability of a node appearing on the centroid path} 
\label{sec:depth path}

There is, of course, a natural extension of $\Lambda_k(1/2)$ to the event
$\Lambda_k(\sigma)$ that node $k$ has at least $\fl{\sigma n}$ descendants,
where $1/2 \le \sigma < 1$, and in fact the closed form we desire for
$\Pb_n(\Lambda_k(1/2))$ is simply a special case of a similar expression for the
more general probability $\Pb_n(\Lambda_k(\sigma))$. Although we have no need
for it in determining the depth and label of the centroid, this more general
version will in fact be required in Section~\ref{sec:root branch}, when
considering the size of the centroid branch containing the root. Since the two
derivations are essentially identical, we deal with the variable case
immediately.

Consider the exponential generating function $y^{(k)}(x)$, which counts trees
(of size at least~$k$) as if the nodes $1$ through~$k$ were `size-less'. We
have:
\begin{align*}
    y^{(k)}(x) &= c_1^{k-1} \alpha\rf{k-1} (1 - c_1x)^{-(\alpha+k-1)} \\
      &= y_k \cdot y'(x) \cdot (1 - c_1x)^{-(k-1)},
\end{align*}
where the three terms can be interpreted as accounting for the configurations of
the first $k$ nodes, the subtree rooted at node~$k$, and the remaining subtrees,
respectively. A simple combinatorial interpretation for the latter can for example be given
for $d$-ary trees, where $c_1 = d-1$, $\alpha = d/(d-1)$ and \begin{math}y(x) = -1+(1-(d-1)x)^{-1/(d-1)}\end{math}.
When the structure of the first $k$ nodes is fixed, there are $(d-1)(k-1)$ places where a (possibly
empty) subtree can be attached to one of the first $k-1$ nodes. This is represented by the
exponential generating function \begin{math}(1+y(x))^{(d-1)(k-1)} = (1 - c_1x)^{-(k-1)}\end{math}.

With this in mind, the number (more accurately: the total weight)
of trees of size~$n$ in which $k$ has $m$ descendent nodes is:
\begin{equation*} y_k \binom{n-k}{m} \lr[\big]{m!\,[x^m]y'(x)}
    \lr*{(n-k-m)!\,[x^{n-k-m}](1 - c_1x)^{-(k-1)}}, \end{equation*}
and the proportion of trees in which $k$'s descendants number at least
$\fl{\sigma n}$ is, for $k > 1$:
\begin{align}\label{eqn:P_n(Lambda_k(sigma)) sum}
    \Pb_n(\Lambda_k(\sigma)) &= \frac{y_k(n-k)!}{y_n}
        \sum_{m=\fl{\sigma n}}^{n-k} \lr[\big]{[x^m]y'(x)}
        \lr*{[x^{n-k-m}](1 - c_1x)^{-(k-1)}} \nonumber \\
  &= \sum_{m=\fl{\sigma n}}^{n-k} \binom{\alpha+m-1}{m} \binom{n-m-2}{k-2}
      \Bigg/ \binom{\alpha+n-2}{n-k},
\end{align}
where we have used the fact that \begin{math}y_n = (n-k)!\,[x^{n-k}]y^{(k)}(x)\end{math}.

As long as the label~$k$ is small relative to the tree's size~$n$, we have an
asymptotic formula for $\Pb_n(\Lambda_k(\sigma))$ that is given in the following theorem.

\begin{theorem}\label{thm:P_n(Lambda_k(sigma))}
For $1 < k < \cl{(1-\sigma)n}$ such that $k = o(n^{1/4})$, the probability that
the node with label~$k$ has at least $\fl{\sigma n}$ descendants satisfies:
\begin{equation}\label{eqn:P_n(Lambda_k(sigma))}
    \Pb_n(\Lambda_k(\sigma)) = I_{1-\sigma}(k-1, \alpha)
        \lr*{1 + O\lr*{\frac{k^2}{\sqrt{n}}}},
\end{equation}
where the error term is uniform in~$\sigma$ over subsets of the form $[1/2,
1-\delta)$, for $0 < \delta < 1/2$, and
\begin{equation*} I_x(a, b) = \frac{\Beta(x; a, b)}{\Beta(a, b)} = \frac{\int_0^x t^{a-1}(1-t)^{b-1}\,dt}{\int_0^1 t^{a-1}(1-t)^{b-1}\,dt} \end{equation*}
is the regularised incomplete beta function.
\end{theorem}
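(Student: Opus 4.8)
The plan is to start from the exact expression \eqref{eqn:P_n(Lambda_k(sigma)) sum} and extract the leading-order behaviour of the summand, then recognise the resulting sum as a Riemann sum for the incomplete beta integral. First I would rewrite the summand using the asymptotics of the binomial coefficients. Writing $m = \fl{\sigma n} + j$ and setting $t = m/n$, I expect
\[
  \binom{\alpha+m-1}{m} = \frac{\Gamma(\alpha+m)}{\Gamma(\alpha)\,m!} \sim \frac{m^{\alpha-1}}{\Gamma(\alpha)},
\]
so that $\binom{\alpha+m-1}{m} \sim (nt)^{\alpha-1}/\Gamma(\alpha)$ uniformly for $t$ in a compact subset of $(0,1)$, in particular for $t \ge \sigma \ge 1/2$ bounded away from~$1$. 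The factor $\binom{n-m-2}{k-2}$ behaves like $(n-m)^{k-2}/(k-2)! \sim (n(1-t))^{k-2}/(k-2)!$, while the denominator $\binom{\alpha+n-2}{n-k}$ is handled by the same $\Gamma$-ratio asymptotics, giving $\sim n^{\alpha+k-2}/(\Gamma(\alpha)\,\Gamma(k-1))$ after combining the Gamma functions. The powers of~$n$ should then telescope: the summand times the spacing $1/n$ should be asymptotic to $t^{\alpha-1}(1-t)^{k-2}/\Beta(k-1,\alpha)$, which is precisely the density whose integral from $0$ to $1-\sigma$ (after the change of variable $t \mapsto 1-t$) yields $I_{1-\sigma}(k-1,\alpha)$.

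The second step is to convert the sum $\sum_m (\cdots)$ into the integral $\int_\sigma^1 t^{\alpha-1}(1-t)^{k-2}\,dt/\Beta(k-1,\alpha) = I_{1-\sigma}(k-1,\alpha)$ via Euler--Maclaurin or a direct Riemann-sum estimate, carefully tracking the error. I would split the relative error into three contributions: the pointwise error in the Gamma-ratio approximations of each of the three binomial factors, the error from replacing the discrete sum by the integral, and boundary effects near $m = \fl{\sigma n}$ and $m = n-k$. Each of the pointwise approximations carries a relative error of order $1/m$, $1/(n-m)$, and so on, but since $k$ enters the exponent of $(n-m)^{k-2}$, I must be careful: the approximation $\binom{n-m-2}{k-2} = \frac{(n-m)^{k-2}}{(k-2)!}(1 + O(k^2/(n-m)))$ contributes a factor whose relative error scales like $k^2/(n-m)$, and since $1-t \ge \delta$ on the relevant range this is $O(k^2/n)$. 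The Riemann-sum-to-integral error for a smooth integrand likewise produces an $O(k^2/n)$ term once one bounds the derivative of the integrand, which involves a factor of~$k$ from differentiating $(1-t)^{k-2}$.

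The main obstacle I anticipate is obtaining the stated error term $O(k^2/\sqrt n)$ \emph{uniformly} in $\sigma$ over $[1/2, 1-\delta)$, rather than merely pointwise. The delicacy is twofold. First, the presence of $k$ in the exponent means the integrand $(1-t)^{k-2}$ is sharply peaked when $k$ grows, so the naive Riemann-sum bound picks up factors of $k$; the hypothesis $k = o(n^{1/4})$ is presumably exactly what is needed to keep $k^2/\sqrt n \to 0$ while absorbing these accumulated powers of~$k$. Second, the lower endpoint $\fl{\sigma n}$ moves with $\sigma$, so one must ensure the endpoint correction (the difference between the floor and $\sigma n$, and the value of the integrand there) is controlled uniformly; since the integrand at $t = \sigma \le 1-\delta$ is bounded by $\delta^{\,\text{(power)}}$ times something tame, this should be manageable but requires care. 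The route I favour is to avoid summing errors term-by-term and instead compare $\Pb_n(\Lambda_k(\sigma))$ directly to a ratio of two complete sums (using a Chu--Vandermonde or beta-integral identity to evaluate the full normalising sum in closed form), so that the only genuine approximation is on the partial tail from $\fl{\sigma n}$ to $n-k$; this isolates the error into a single clean estimate and makes the uniformity in $\sigma$ transparent.
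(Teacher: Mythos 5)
Your overall strategy is the same as the paper's (start from the exact sum \eqref{eqn:P_n(Lambda_k(sigma)) sum}, approximate the binomial factors by Gamma-ratio asymptotics, and pass to the beta integral via Euler--Maclaurin), but your error analysis contains a genuine gap at the upper end of the summation range. You assert that $1-t \ge \delta$ ``on the relevant range'', so that the approximation $\binom{n-m-2}{k-2} = \frac{(n-m)^{k-2}}{(k-2)!}\bigl(1+O(k^2/(n-m))\bigr)$ has uniform relative error $O(k^2/n)$. This is false: the hypothesis $\sigma \le 1-\delta$ bounds the \emph{lower} endpoint $t=\sigma$ away from $1$, but the sum runs up to $m = n-k$, where $1-t = k/n \to 0$; at that endpoint $\binom{n-m-2}{k-2} = 1$ while $(n-m)^{k-2}/(k-2)! = k^{k-2}/(k-2)!$, so the relative error is not small at all. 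The fact that your accounting yields $O(k^2/n)$ while the theorem only claims $O(k^2/\sqrt{n})$ should itself have been a warning: the $\sqrt{n}$ comes precisely from handling this endpoint. The paper reindexes $m \mapsto n-m$ (so the integrand becomes $t^{k-2}(1-t)^{\alpha-1}$ on $[0,1-\sigma]$ and the troublesome region is $t$ near $0$), splits the sum at $n^{1-\varepsilon}$, bounds the discarded piece by $O(n^{-(k-1)\varepsilon})$ using the smallness of $t^{k-2}$ there, and retains the polynomial approximation only where $m \ge n^{1-\varepsilon}$, with relative error $O(k^2/n^{1-\varepsilon})$; the choice $\varepsilon = 1/2$ balances the two contributions and gives exactly the stated $O(k^2/\sqrt{n})$, uniformly for $\sigma \in [1/2,1-\delta)$. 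Your proposal is missing this splitting step, and without it the term-by-term estimates do not close.

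The alternative route you favour at the end --- evaluating the complete normalising sum in closed form and approximating only the complementary piece --- does not repair this, for two reasons. First, the denominator of \eqref{eqn:P_n(Lambda_k(sigma)) sum} is \emph{already} the closed form $\binom{\alpha+n-2}{n-k}$ (this is the Chu--Vandermonde evaluation), so nothing new is gained; the ``partial tail from $\fl{\sigma n}$ to $n-k$'' is the numerator itself and still contains the bad region. Second, if instead you mean computing $\Pb_n(\Lambda_k(\sigma)) = 1 - \Sigma_c/\binom{\alpha+n-2}{n-k}$ with $\Sigma_c$ the complementary sum over $m < \fl{\sigma n}$, you run into catastrophic cancellation: the target $I_{1-\sigma}(k-1,\alpha)$ is of order $k^{\alpha-1}(1-\sigma)^{k-1} \le C\,k^{\alpha-1}2^{-k}$, geometrically small in $k$, so a \emph{relative} error statement uniform in $k = o(n^{1/4})$ would require the complementary quantity to be known with additive accuracy geometrically small in $k$ --- which polynomial-in-$1/n$ estimates cannot deliver once $k$ grows. (The complement also trades the $t\to1$ endpoint for a $t \to 0$ endpoint, where $\binom{\alpha+m-1}{m} \sim m^{\alpha-1}/\Gamma(\alpha)$ fails for bounded $m$.) The direct approach works precisely because every summand of the numerator carries the factor $\binom{n-m-2}{k-2}$ with $n-m \le (1-\sigma)n$, so the geometric smallness is built into each term rather than produced by subtraction.
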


\begin{proof}
The main step in going from~\eqref{eqn:P_n(Lambda_k(sigma)) sum}
to~\eqref{eqn:P_n(Lambda_k(sigma))} is an application of the Euler-Maclaurin
formula; but first, note that as $n$ grows:
\begin{align*}
    \Pb_n(\Lambda_k(\sigma)) &= \frac{\ds \sum_{m=\fl{\sigma n}}^{n-k}
        \frac{m^{\alpha-1}}{\Gamma(\alpha)} \lr*{1 + O\lr*{\frac{1}{m}}}
        \binom{n-m-2}{k-2}}
        {\ds \frac{(n-k)^{\alpha+k-2}}{\Gamma(\alpha+k-1)}
        \lr*{1 + \frac{(\alpha+k-1)\ff{2}}{2(n-k)}
          + O\lr*{\frac{k^4}{(n-k)^2}}}} \\
  &= \frac{n^{-(\alpha+k-2)}}{\Beta(k-1, \alpha)}
        \sum_{m=k}^{\cl{(1-\sigma)n)}} (n-m)^{\alpha-1} (m - 2)\ff{k-2}
        \lr*{1 + O\lr*{\frac{k^2}{n}}}.
\end{align*}
Splitting the sum at an intermediate value~$n^{1-\varepsilon}$, where $0 <
\varepsilon < 1$, reveals that the contribution from smaller values of~$m$ is
minimal:
\begin{equation}\label{eqn:P_n(Lambda_k(sigma)) small}
    n^{-(\alpha+k-2)} \sum_{m=k}^{\cl{n^{1-\varepsilon}}-1} (n-m)^{\alpha-1}
        (m-2)\ff{k-2} = O\lr*{n^{-(k-1)\varepsilon}}.
\end{equation}
It suffices now to apply the Euler-Maclaurin formula to the dominant portion of
the sum:
\begin{align}\label{eqn:P_n(Lambda_k(sigma)) large}
    &n^{-(\alpha+k-2)} \sum_{m=\cl{n^{1-\varepsilon}}}^{\cl{(1-\sigma)n}}
        (n-m)^{\alpha-1} m^{k-2} \lr*{1 + O\lr*{\frac{k^2}{m}}} \nonumber \\
  =\; &n^{-(\alpha+k-2)} \int_{\cl{n^{1-\varepsilon}}}^{\cl{(1-\sigma)n}}
        (n-u)^{\alpha-1} u^{k-2}\, du \lr*{1
      + O\lr*{\frac{k^2}{n^{1-\varepsilon}}}} + O\lr*{\frac1n} \nonumber \\
  =\; &\int_{n^{-\varepsilon}+O(1/n)}^{1-\sigma+O(1/n)} t^{k-2}
        (1-t)^{\alpha-1}\, dt \lr*{1 + O\lr*{\frac{k^2}{n^{1-\varepsilon}}}}.
\end{align}
Note that in absorbing the error term~$O(1/n)$ we have implicitly treated
$\sigma$ as a constant with respect to~$n$, and as such, the uniformity of the
error term only holds as long as $\sigma$ is not allowed to tend arbitrarily
close to~$1$. Also, as long as $\varepsilon \ge 1/k$, the term
$k^2/n^{1-\varepsilon}$ is not smaller in order than that of
equation~\eqref{eqn:P_n(Lambda_k(sigma)) small}, and the contribution of the
first portion of the sum can be ignored. This is the case for all $k > 1$ when
$1/2 \le \varepsilon < 1$.

As $n$ grows, the bounds of the integral in~\eqref{eqn:P_n(Lambda_k(sigma))
large} approach $0$ and~$1-\sigma$ at the following rates:
\begin{equation*} \int_0^{n^{-\varepsilon}+O(1/n)} t^{k-2} (1-t)^{\alpha-1}\, dt
  = O\lr*{n^{-(k-1)\varepsilon}}, \end{equation*}
and
\begin{equation*} \int_{1-\sigma}^{1-\sigma+O(1/n)} t^{k-2} (1-t)^{\alpha-1}\, dt
  = O\lr*{\frac1n}. \end{equation*}
Noting that these terms are also of orders lower than $k^2/n^{1-\varepsilon}$
when $1/2 \le \varepsilon < 1$, we see that the probability of node~$k$ having
at least $\fl{\sigma n}$ descendants can be written, for $1 < k =
o(n^{(1-\varepsilon)/2})$, as:
\begin{align*}
    \Pb_n(\Lambda_k(\sigma)) &= \frac{1}{\Beta(k-1, \alpha)}
        \int_0^{1-\sigma} t^{k-2} (1-t)^{\alpha-1}\, dt
        \lr*{1 + O\lr*{\frac{k^2}{n^{1-\varepsilon}}}} \\
  &= I_{1-\sigma}(k-1, \alpha)
  \lr*{1 + O\lr*{\frac{k^2}{n^{1-\varepsilon}}}}. \qedhere
\end{align*}
\end{proof}

We mention also that an alternative representation of $\Pb_n(\Lambda_k(\sigma))$
can be obtained using the binomial theorem, since
\begin{align*}
    \int_0^{1-\sigma} t^{k-2} (1-t)^{\alpha-1}\, dt &= \Beta(k-1, \alpha)
      - \int_{1-\sigma}^1 t^{k-2} (1-t)^{\alpha-1}\, dt \\
  &= \Beta(k-1, \alpha) - \sum_{l=0}^{k-2} \binom{k-2}{l}
        \frac{(-1)^l}{l+\alpha}\sigma^{l+\alpha}.
\end{align*}

\begin{corollary}\label{cor:P_n(Lambda_k(1/2))}
For $1 < k = o(n^{1/4})$, the probability that node~$k$ is on the path from the
root to the (nearest) centroid node satisfies:
\begin{equation*} \Pb_n(\Lambda_k(1/2)) = I_{1/2}(k-1, \alpha)
    \lr*{1 + O\lr*{\frac{k^2}{\sqrt{n}}}}. \end{equation*}
\end{corollary}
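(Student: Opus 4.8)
The corollary follows by direct specialisation, so the plan is simply to substitute $\sigma = 1/2$ into Theorem~\ref{thm:P_n(Lambda_k(sigma))} and verify that the hypotheses of the theorem are met at this value. First I would recall the observation made at the start of Section~\ref{sec:depth}: node~$k$ lies on the path from the root to the (nearest) centroid if and only if it has at least $\fl{n/2}$ descendants, which is exactly the event $\Lambda_k(1/2)$. Thus the probability in question is precisely $\Pb_n(\Lambda_k(1/2))$, and there is nothing to prove beyond applying the general asymptotic formula at the single point $\sigma = 1/2$.

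Next I would check that $\sigma = 1/2$ is an admissible value. The theorem requires $1/2 \le \sigma < 1$, so $\sigma = 1/2$ is the (included) left endpoint; moreover the error term is stated to be uniform over sets of the form $[1/2, 1-\delta)$, and $\sigma = 1/2$ is safely bounded away from~$1$, so the uniformity caveat about $\sigma$ tending to~$1$ does not bite. The range condition $1 < k < \cl{(1-\sigma)n}$ becomes $1 < k < \cl{n/2}$, which is implied by the stated hypothesis $k = o(n^{1/4})$ for all large~$n$; and that same hypothesis $k = o(n^{1/4})$ is exactly the one appearing in the theorem (corresponding to taking $\varepsilon = 1/2$ in the proof, which forces the error $k^2/n^{1-\varepsilon} = k^2/\sqrt{n}$).

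With these hypotheses verified, the conclusion $\Pb_n(\Lambda_k(1/2)) = I_{1/2}(k-1,\alpha)\lr*{1 + O(k^2/\sqrt{n})}$ is immediate from~\eqref{eqn:P_n(Lambda_k(sigma))}, with $1-\sigma = 1/2$ appearing as the upper limit of the regularised incomplete beta function. Since this is purely a matter of substitution, there is no genuine obstacle; the only point requiring a moment's care is confirming that the left endpoint $\sigma = 1/2$ is genuinely covered by the theorem's uniformity statement rather than merely being a limiting case—but as noted, the stated range $[1/2, 1-\delta)$ explicitly includes it.
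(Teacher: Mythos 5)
Your proposal is correct and matches the paper exactly: the paper states this corollary without a separate proof, treating it as the immediate specialisation $\sigma = 1/2$ of Theorem~\ref{thm:P_n(Lambda_k(sigma))}, whose stated error term $O(k^2/\sqrt{n})$ already corresponds to the choice $\varepsilon = 1/2$ in that theorem's proof. Your verification of the hypotheses (the endpoint $\sigma = 1/2$ lying in $[1/2, 1-\delta)$, and $k = o(n^{1/4})$ implying $k < \cl{n/2}$ for large $n$) is exactly the routine check the paper leaves implicit.
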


Finally, we give limiting probabilities for the event that node~$k$ is on the
path to the centroid in the two simplest families of very simple increasing
trees, for which the incomplete beta function can be easily simplified:
\begin{corollary}
For recursive trees:
\begin{equation*} \lim_{n\to\infty} \Pb_n(\Lambda_k(1/2)) = I_{1/2}(k-1, 1) = 2^{-(k-1)}. \end{equation*}
For binary increasing trees:
\begin{equation*} \lim_{n\to\infty} \Pb_n(\Lambda_k(1/2))
  = I_{1/2}(k-1, 2) = (k+1) 2^{-k}. \end{equation*}
\end{corollary}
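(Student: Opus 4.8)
The plan is to obtain both limits directly from Corollary~\ref{cor:P_n(Lambda_k(1/2))} and then evaluate the resulting regularised incomplete beta function in closed form for the two families. Since $k$ is held fixed as $n \to \infty$, the hypothesis $1 < k = o(n^{1/4})$ is satisfied for all large~$n$, and the multiplicative error term $1 + O(k^2/\sqrt{n})$ tends to~$1$. Hence $\lim_{n\to\infty} \Pb_n(\Lambda_k(1/2)) = I_{1/2}(k-1, \alpha)$, and it remains only to substitute the family-specific values $\alpha = 1$ (recursive trees) and $\alpha = 2$ (binary increasing trees) and simplify.

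For recursive trees, with $\alpha = 1$ the weight $(1-t)^{\alpha-1}$ appearing in both integrals that define $I_{1/2}(k-1, 1)$ reduces to~$1$, so the integrands are simply $t^{k-2}$. The numerator and denominator are then elementary, namely $\int_0^{1/2} t^{k-2}\,dt = (1/2)^{k-1}/(k-1)$ and $\int_0^1 t^{k-2}\,dt = 1/(k-1)$, and their ratio collapses to $2^{-(k-1)}$, as claimed.

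For binary increasing trees, with $\alpha = 2$ the factor $(1-t)^{\alpha-1}$ becomes $1 - t$, so each integrand is $t^{k-2} - t^{k-1}$. The denominator is the complete beta function $\Beta(k-1, 2) = 1/\big(k(k-1)\big)$, while the numerator integrates to $(1/2)^{k-1}/(k-1) - (1/2)^k/k$ over the interval $[0, 1/2]$. Multiplying through by $k(k-1)$ and collecting the powers of $1/2$ yields $k\,(1/2)^{k-1} - (k-1)(1/2)^k = (k+1)2^{-k}$, which is the stated value.

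Because every step is an exact evaluation of explicit integrals, there is no genuine analytic obstacle; the only point requiring a moment's care is the justification that the error term vanishes, which is immediate since $k$ is fixed while $n \to \infty$. The same identities could be extracted instead from the binomial-sum representation of $I_{1/2}(k-1,\alpha)$ recorded after Theorem~\ref{thm:P_n(Lambda_k(sigma))}, but the direct integral evaluation is the shortest route in these two low-parameter cases.
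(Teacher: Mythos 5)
Your proposal is correct and follows exactly the route the paper intends: the paper presents this corollary without proof as an immediate consequence of Corollary~\ref{cor:P_n(Lambda_k(1/2))}, noting only that the incomplete beta function ``can be easily simplified'' for these two families, and your explicit evaluations of $I_{1/2}(k-1,1)=2^{-(k-1)}$ and $I_{1/2}(k-1,2)=(k+1)2^{-k}$ (together with the observation that the error term $O(k^2/\sqrt{n})$ vanishes for fixed $k$) supply precisely the omitted details. The computations check out, so there is nothing to correct.
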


\subsection{A uniform bound on the path probability} 

In addition to the asymptotic expression for $\Pb_n(\Lambda_k(\sigma))$ given
above, we can show that the probability of a specific node~$k$ appearing on the
path to the centroid not only vanishes for large~$k$, but does so exponentially
in~$k$ and uniformly over~$n$. In particular:
\begin{equation}\label{eqn:P_n(Lambda_k(1/2)) bound}
    \Pb_n(\Lambda_k(1/2)) \le \frac{\alpha\rf{k-1}}{(k-1)!} 2^{-(k-2)}.
\end{equation}
It is this fact, in combination with Corollary~\ref{cor:P_n(Lambda_k(1/2))},
that will allow us to derive limiting distributions for events that depend on
$\Pb_n(\Lambda_k(1/2))$.

Once again a more general form of this result will be required later, in
Section~\ref{sec:root branch}, so to avoid a repeated derivation we give the
version for variable~$\sigma$ here.

\begin{lemma}\label{lem:P_n(Lambda_k(sigma)) bound}
For $k \ge 1$ and $1/2 \le \sigma < 1$, the probability that node~$k$ has at
least $\fl{\sigma n}$ descendants in a tree of size $n \ge 3$ is subject to an
upper bound that decreases exponentially with~$k$:
\begin{equation}\label{eqn:P_n(Lambda_k(sigma)) bound}
    \Pb_n(\Lambda_k(\sigma)) \le \frac{3}{\sigma}
        \frac{\alpha\rf{k-1}}{(k-1)!} (1-\sigma)^{k-1}.
\end{equation}
\end{lemma}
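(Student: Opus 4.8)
The plan is to start from the exact expression for $\Pb_n(\Lambda_k(\sigma))$ given in equation~\eqref{eqn:P_n(Lambda_k(sigma)) sum}, namely the finite sum
\begin{equation*}
    \Pb_n(\Lambda_k(\sigma)) = \sum_{m=\fl{\sigma n}}^{n-k}
        \binom{\alpha+m-1}{m} \binom{n-m-2}{k-2}
        \Bigg/ \binom{\alpha+n-2}{n-k},
\end{equation*}
and to bound it term by term rather than asymptotically. Since we want a bound valid for \emph{all} $n \ge 3$ and $k \ge 1$ (not just small $k$), I would avoid Euler--Maclaurin and instead manipulate the summand directly. The factor $\binom{\alpha+m-1}{m} = \alpha\rf{m}/m!$ is a rising-factorial ratio whose growth in $m$ is the engine of the bound, while the binomial coefficient $\binom{n-m-2}{k-2}$ localises the mass of the sum near the lower limit $m = \fl{\sigma n}$. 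The key structural observation is that, because $m \ge \fl{\sigma n} \ge \sigma n - 1$, we can factor out the extremal term at $m = \fl{\sigma n}$ and control the remaining ratio of consecutive summands.

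The main steps, in order, would be: first, rewrite both binomial coefficients using factorials and the rising-factorial notation $\alpha\rf{k-1}$ so that the target factor $\alpha\rf{k-1}/(k-1)!$ can be extracted. Second, estimate the ratio of consecutive summands $a_{m+1}/a_m$ to show the sum is dominated by a geometric-type tail; the quantity $\binom{n-m-2}{k-2}$ shrinks as $m$ increases while $\binom{\alpha+m-1}{m}$ grows only polynomially in $m$ (like $m^{\alpha-1}$), so for the range $m \ge \sigma n$ the net behaviour should yield a geometric decay governed by $(1-\sigma)$. Third, bound the full sum by the leading term times a convergent geometric series, and simplify the resulting constant. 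The factor $(1-\sigma)^{k-1}$ should emerge from $\binom{n-m-2}{k-2}/\binom{\alpha+n-2}{n-k}$ evaluated near $m \approx \sigma n$, since the descendant count being at least a $\sigma$-fraction forces the ancestral portion into a window of relative size roughly $1-\sigma$, and $k-2$ independent such constraints compound to the power $k-1$.

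The hard part will be extracting a \emph{clean} constant $3/\sigma$ uniformly, rather than merely a constant depending on $\alpha$ and $\sigma$ in some unwieldy way. In particular, I expect the delicate point to be the interplay between the $\fl{\cdot}$ in $\fl{\sigma n}$ and the need for the bound to hold down to $n = 3$, where the floor and the range of summation behave irregularly and asymptotic approximations are unavailable. I would handle this by treating small $n$ as boundary cases if necessary, and by choosing crude but safe inequalities (for instance replacing $n - m$ by its value at the endpoint and bounding $\binom{\alpha+m-1}{m}$ from above by a monotone surrogate) so that the geometric series sums to something that the prefactor $1/\sigma$ absorbs comfortably, with the modest numerical slack $3$ covering the accumulated looseness. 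A useful sanity check throughout is the special case $\sigma = 1/2$, which must reduce to the sharper-looking bound~\eqref{eqn:P_n(Lambda_k(1/2)) bound} with constant $2^{-(k-2)} = 4 \cdot 2^{-k}$ matching $(3/\sigma)(1-\sigma)^{k-1} = 6 \cdot 2^{-(k-1)}$ up to the deliberately generous constant.
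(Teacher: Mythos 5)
Your starting point (the exact sum \eqref{eqn:P_n(Lambda_k(sigma)) sum}) and your goal of a term-by-term, non-asymptotic estimate match the paper, but the engine of your argument---``the sum is dominated by a geometric-type tail'' near $m=\fl{\sigma n}$, to be bounded by the leading term times a convergent geometric series---does not work. Writing $a_m = \binom{\alpha+m-1}{m}\binom{n-m-2}{k-2}$, the ratio of consecutive summands is
\begin{equation*}
\frac{a_{m+1}}{a_m} = \frac{\alpha+m}{m+1}\cdot\frac{n-m-k}{n-m-2},
\end{equation*}
which for fixed $k$ is $1-\Theta(k/n)$ throughout the summation range---not bounded away from $1$ uniformly in $n$---and for $\alpha>1$ and small $k$ can even exceed $1$ (for binary increasing trees, $\alpha=2$ and $k=2$, the terms are strictly \emph{increasing} in $m$). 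So the geometric series you invoke sums to $\Theta(n/k)$, not $O(1)$, and the mass of the sum is spread over $\Theta(n)$ terms rather than localised at the lower endpoint. A magnitude check makes the failure stark: each individual term of \eqref{eqn:P_n(Lambda_k(sigma)) sum} is $\Theta(1/n)$ for fixed $k$, while the whole probability tends to the positive constant $I_{1-\sigma}(k-1,\alpha)$ by Theorem~\ref{thm:P_n(Lambda_k(sigma))}; ``leading term times a convergent series'' would therefore prove a false $O(1/n)$ bound. Relatedly, your proposed source of the factor $(1-\sigma)^{k-1}$---the single summand near $m\approx\sigma n$---only yields exponent $k-2$; the missing power of $1-\sigma$ is contributed by the summation over $m$ itself.

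The paper's proof repairs exactly this point with two exact steps in place of a ratio test. First, the monotonicity inequality \eqref{eqn:binomial identity}, namely $\binom{m-1+\alpha}{m}\le\frac{N+\alpha}{m}\binom{N-1+\alpha}{N}$ for $m\le N$, applied with $N=n-k$, strips all $m$-dependence from the first binomial coefficient except a factor $1/m\le1/\fl{\sigma n}$. Second, the remaining sum is evaluated in closed form by the hockey-stick identity, $\sum_{m=\fl{\sigma n}}^{n-k}\binom{n-m-2}{k-2}=\binom{\cl{(1-\sigma)n}-1}{k-1}$---this is where the $\Theta(n)$ terms are correctly accumulated and the exponent rises from $k-2$ to $k-1$. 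After simplification via Gamma functions one is left with $\frac{\alpha\rf{k-1}}{(k-1)!}\cdot\frac{n-k+\alpha}{\fl{\sigma n}}\cdot\frac{(\cl{(1-\sigma)n}-1)\ff{k-1}}{(n-2+\alpha)\ff{k-1}}$, and elementary factor-by-factor bounds exploiting $\sigma\ge1/2$ (e.g.\ $\fl{\sigma n}\ge\sigma(n-2)$ and $\cl{(1-\sigma)n}-1-j\le(1-\sigma)(n-2j)$) extract $(1-\sigma)^{k-1}/\sigma$ times $n/(n-2)$, which is at most $3$ for $n\ge3$; so the constant $3$ is not ``accumulated geometric looseness'' but literally $n/(n-2)$ at $n=3$. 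The boundary cases you defer are in fact trivial ($\Pb_n(\Lambda_1(\sigma))=1\le3/\sigma$, and $\Pb_n(\Lambda_k(\sigma))=0$ for $k>\cl{(1-\sigma)n}$); it is the core localisation step of your plan that must be replaced by the exact summation (or an equivalent integral comparison) for the argument to close.
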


\begin{proof}
Firstly, take note of the following inequality involving binomial coefficients:
if $\alpha \in \reals_{\ge0}$ and $m \le n \in \mb{Z}_{\ge0}$, then:
\begin{align}\label{eqn:binomial identity}
    \binom{m-1+\alpha}{m} &= \frac{(\alpha+m-1)\ff{m}}{m!} \nonumber \\
  &\le \frac{(\alpha+n) \dotsm (\alpha+m)}{n \dotsm m}
        \frac{(\alpha+m-1)\ff{m}}{m!} \nonumber \\
  &= \frac{n+\alpha}{m} \binom{n-1+\alpha}{n}.
\end{align}
Since $\Pb_n(\Lambda_1(\sigma)) = 1$, and $\Pb_n(\Lambda_k(\sigma)) = 0$
whenever $k > \cl{(1-\sigma)n}$, we need only consider $1 < k \le
\cl{(1-\sigma)n}$. In this case:
\begin{align*}
    \Pb_n(\Lambda_k(\sigma)) &= \sum_{m=\fl{\sigma n}}^{n-k}
        \binom{m-1+\alpha}{m} \binom{n-m-2}{k-2} \Bigg/
        \binom{n-2+\alpha}{n-k} \\
&\leq \sum_{m=\fl{\sigma n}}^{n-k} \frac{n-k+\alpha}{m} 
        \binom{n-k-1+\alpha}{n-k} \binom{n-m-2}{k-2} \Bigg/
        \binom{n-2+\alpha}{n-k} \\
&\leq \sum_{m=\fl{\sigma n}}^{n-k} \frac{n-k+\alpha}{\fl{\sigma n}} 
        \binom{n-k-1+\alpha}{n-k} \binom{n-m-2}{k-2} \Bigg/
        \binom{n-2+\alpha}{n-k} \\
 &= \frac{n-k+\alpha}{\fl{\sigma n}} \binom{n-k-1+\alpha}{n-k}
        \binom{\cl{(1-\sigma)n} - 1}{k-1} \Bigg/ \binom{n-2+\alpha}{n-k} \\
  &= \frac{n-k+\alpha}{\fl{\sigma n}}
        \frac{(\cl{(1-\sigma)n} - 1)\ff{k-1}}{(k-1)!}
        \frac{\Gamma(n-k+\alpha)}{\Gamma(n-1+\alpha)}
        \frac{\Gamma(k-1+\alpha)}{\Gamma(\alpha)} \\
  &= \frac{\alpha\rf{k-1}}{(k-1)!} \frac{n-k+\alpha}{\fl{\sigma n}}
        \frac{(\cl{(1-\sigma)n} - 1)\ff{k-1}}{(n-2+\alpha)\ff{k-1}} \\
  &\le \frac{\alpha\rf{k-1}}{(k-1)!} \frac{n-k+\alpha}{n-2}
        \frac{n(n-2) \dotsm (n-2k+4)}{(n-2+\alpha) \dotsm (n-k+\alpha)}
        \frac{(1-\sigma)^{k-1}}{\sigma} \\
  &\le \frac{\alpha\rf{k-1}}{(k-1)!} \frac{n}{n-2}
        \frac{(1-\sigma)^{k-1}}{\sigma}.
\end{align*}
This yields the stated bound whenever $n \ge 3$. In the specific case $\sigma =
1/2$, the final two lines can be improved slightly, resulting in
equation~\eqref{eqn:P_n(Lambda_k(1/2)) bound}:
\begin{align*}
\frac{\alpha\rf{k-1}}{(k-1)!} \frac{n-k+\alpha}{\fl{n/2}}
        \frac{(\cl{n/2} - 1)\ff{k-1}}{(n-2+\alpha)\ff{k-1}} &\leq \frac{\alpha\rf{k-1}}{(k-1)!}\frac{(\cl{n/2} - 2)\ff{k-2}}{(n-2+\alpha)\ff{k-2}} \\
&= \frac{\alpha\rf{k-1}}{(k-1)!} \prod_{j=1}^{k-2} \frac{\cl{n/2} - 1 - j}{n-1+\alpha-j} \\
&\leq \frac{\alpha\rf{k-1}}{(k-1)!} \prod_{j=1}^{k-2} \frac{(n+1)/2 - 1 - j}{n-1-j} \\
&= \frac{\alpha\rf{k-1}}{(k-1)!} 2^{-(k-2)} \prod_{j=1}^{k-2} \frac{n -1-2j}{n-1-j} \\
&\leq \frac{\alpha\rf{k-1}}{(k-1)!} 2^{-(k-2)}. \qedhere
\end{align*}
\end{proof}

\subsection{A limiting distribution for the depth of the centroid} 

Let $D = D(T)$ denote the depth of the centroid---that is, the number of edges on
the path from the root to the centroid node (the nearer if there are two)---in a
random tree~$T$. As mentioned earlier, $D(T) \ge h$ if and only if there is a
vertex at depth~$h$ that is on this path. Breaking this event down per label, we
may say that the depth of the centroid is at least $h$ if and only if for some
label~$k$, node~$k$ has both depth~$h$ and is present on the path to the
centroid. Since these per-label events are disjoint, and the depth of node $k$ is
independent of the size of the subtree rooted at $k$ (see Remark~\ref{rem:independence}),
we have
\begin{equation*} \Pb_n(D \ge h) = \sum_{k\ge1} \Pb_n(D_k = h) \Pb_n(\Lambda_k(1/2)), \end{equation*}
in which $D_k$ is a random variable over the possible depths $h \in
\mb{Z}_{\ge0}$ of node~$k$. This random variable has a known probability
generating function \citep{panholzer07level}:
\begin{equation}\label{eqn:D_k pgf}
    \sum_{h\ge0} \Pb_n(D_k = h) v^h = \prod_{j=0}^{k-2}
        \frac{\alpha v + j}{\alpha+j}
      = \frac{(\alpha v)\rf{k-1}}{\alpha\rf{k-1}},
\end{equation}
which is independent of $n$, implying that \begin{math}\Pb_n(D_k = h) = \Pb(D_k = h)\end{math} is as
well. Combining these two expressions yields a (complementary cumulative)
probability generating function for the depth of the centroid:
\begin{align}\label{eqn:C_n(v)}
    C_n(v) = \sum_{h\ge0} \Pb_n(D \ge h) v^h &= \sum_{k\ge1}
        \Pb_n(\Lambda_k(1/2)) \sum_{h\ge0} \Pb(D_k = h) v^h \nonumber \\
  &= \sum_{k\ge1} \Pb_n(\Lambda_k(1/2))
    \frac{(\alpha v)\rf{k-1}}{\alpha\rf{k-1}}.
\end{align}
Our goal is to make use of the asymptotic form of $\Pb_n(\Lambda_k(1/2))$ given
in Corollary~\ref{cor:P_n(Lambda_k(1/2))} to find the limit of this generating
function, and then simply extract the desired probabilities as coefficients.

\begin{theorem}\label{thm:P(D = h)}
The depth $D(\mc{T}_n)$ of the centroid node in a random tree of size~$n$
converges in probability to the discrete random variable $\mc{D}$ supported by
$\mb{Z}_{\ge0}$ and with cumulative distribution function:
\begin{equation*} \Pb(\mc{D} \ge h) = \lr*{\frac{\alpha}{\alpha-1}}^h
    \lr*{1 - 2^{1-\alpha} \sum_{j=0}^{h-1} \frac{((\alpha-1)\ln2)^j}{j!}} \end{equation*}
and mass function:
\begin{equation*} \Pb(\mc{D} = h) = \frac{\alpha^h}{(\alpha-1)^{h+1}}
    \lrb*{2^{1-\alpha} \lr*{\sum_{j=0}^{h-1}
    \frac{\lr[\big]{(\alpha-1)\ln2}^j}{j!}
      + \frac{\alpha\lr[\big]{(\alpha-1)\ln2}^h}{h!}}\! - 1}. \end{equation*}
\end{theorem}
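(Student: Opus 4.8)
My strategy is to work directly with the complementary-cumulative generating function $C_n(v)$ from equation~\eqref{eqn:C_n(v)}, take its termwise limit as $n \to \infty$ using the asymptotic path probability from Corollary~\ref{cor:P_n(Lambda_k(1/2))}, and then read off both stated formulas as coefficients. The first task is to justify interchanging the limit with the sum over~$k$: this is exactly what the uniform exponential bound~\eqref{eqn:P_n(Lambda_k(1/2)) bound} is for. Since $\Pb_n(\Lambda_k(1/2)) \le \frac{\alpha\rf{k-1}}{(k-1)!}2^{-(k-2)}$ and the factor $(\alpha v)\rf{k-1}/\alpha\rf{k-1}$ grows only polynomially in~$k$ for fixed~$v$, the general term is dominated by something summable (uniformly in~$n$) whenever $v$ is small enough, so dominated convergence gives
\begin{equation*}
  C(v) := \lim_{n\to\infty} C_n(v)
    = \sum_{k\ge1} I_{1/2}(k-1, \alpha)\,\frac{(\alpha v)\rf{k-1}}{\alpha\rf{k-1}}.
\end{equation*}

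**Evaluating the limit sum.**
The core computation is to turn this series into a closed form. First I would substitute the definition of the regularised incomplete beta function, $I_{1/2}(k-1,\alpha) = \frac{1}{\Beta(k-1,\alpha)}\int_0^{1/2} t^{k-2}(1-t)^{\alpha-1}\,dt$, and note that $\Beta(k-1,\alpha) = \frac{(k-2)!\,\Gamma(\alpha)}{\Gamma(\alpha+k-1)} = \frac{(k-2)!}{\alpha\rf{k-1}}$, so the awkward $\alpha\rf{k-1}$ in the denominator of~\eqref{eqn:C_n(v)} cancels cleanly against the one hidden in the beta function. This leaves
\begin{equation*}
  C(v) = \sum_{k\ge1} \frac{(\alpha v)\rf{k-1}}{(k-2)!}
    \int_0^{1/2} t^{k-2}(1-t)^{\alpha-1}\,dt,
\end{equation*}
after which I would pull the integral outside and sum over~$k$. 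The inner sum $\sum_{k\ge2}\frac{(\alpha v)\rf{k-1}}{(k-2)!}\,t^{k-2}$ is, up to reindexing, a generalised binomial/hypergeometric series of the form $\sum_{m\ge0}\frac{(\alpha v)\rf{m+1}}{m!}t^m$, which should collapse to something like $\alpha v\,(1-t)^{-\alpha v - 1}$ (by differentiating the standard identity $\sum_m \binom{\alpha v + m - 1}{m}t^m = (1-t)^{-\alpha v}$). That reduces $C(v)$ to a single elementary integral $\int_0^{1/2}\alpha v\,(1-t)^{\alpha-1-\alpha v-1}\,dt$, which evaluates to a combination of powers of $2^{1-\alpha}$ and the factor $\alpha/(\alpha-1-\alpha v)$.

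**Extracting the coefficients.**
The target cumulative formula, $\Pb(\mc{D}\ge h) = (\frac{\alpha}{\alpha-1})^h(1 - 2^{1-\alpha}\sum_{j=0}^{h-1}\frac{((\alpha-1)\ln 2)^j}{j!})$, is precisely $[v^h]C(v)$, so the final step is a coefficient extraction from the closed form. The power $2^{-(\alpha v)}$ that arises from the upper limit $t = 1/2$ expands as $e^{-\alpha v\ln 2}$, producing the exponential/$\ln 2$ terms, while the rational prefactor $\frac{\alpha}{\alpha - 1 - \alpha v}$ expands geometrically as $\frac{\alpha}{\alpha-1}\sum_h(\frac{\alpha v}{\alpha-1})^h$, producing the $(\frac{\alpha}{\alpha-1})^h$ factor; a Cauchy product of these two series, together with careful bookkeeping of the constant term (the ``$1$''), should reproduce the stated $\Pb(\mc{D}\ge h)$. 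The mass function then follows immediately from $\Pb(\mc{D}=h) = \Pb(\mc{D}\ge h) - \Pb(\mc{D}\ge h+1)$, and the slightly intricate bracketed expression in the theorem is just the simplification of this difference.

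**Main obstacle.**
I expect the genuinely delicate point to be the justification of the limit interchange rather than the algebra: the bound~\eqref{eqn:P_n(Lambda_k(1/2)) bound} controls $\Pb_n(\Lambda_k(1/2))$ uniformly, but I must confirm the resulting dominating series converges on a neighbourhood of~$v=0$ large enough to determine all coefficients, and argue that convergence of the generating functions transfers to convergence of the individual probabilities $\Pb_n(\mc{D}=h)\to\Pb(\mc{D}=h)$ (i.e.\ that the $O(k^2/\sqrt n)$ error in Corollary~\ref{cor:P_n(Lambda_k(1/2))}, once weighted and summed, is genuinely negligible). The closed-form evaluation, by contrast, is a routine if somewhat fiddly manipulation of binomial series and a single beta-type integral.
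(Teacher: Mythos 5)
Your proposal is correct and follows essentially the same route as the paper: take the pointwise limit of $C_n(v)$ using Corollary~\ref{cor:P_n(Lambda_k(1/2))} together with the uniform bound~\eqref{eqn:P_n(Lambda_k(1/2)) bound} (the paper implements your dominated-convergence step as an explicit split of the sum at $K = \fl{n^{1/4-\varepsilon}}$), evaluate the resulting beta-integral/binomial series to get $C(v) = 1 + \frac{\alpha v}{\alpha(v-1)+1}\lr[\big]{2^{\alpha(v-1)+1}-1}$, and extract coefficients, with the transfer from convergence of generating functions to convergence of the probabilities handled---as you anticipated---by the continuity theorem for probability generating functions \citep[Theorem~IX.1]{flajolet09analytic}, applied to the genuine PGF $A_n(v) = C_n(v) - (C_n(v)-1)/v$. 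The only blemish is a harmless sign slip in your sketch: the boundary term at $t = 1/2$ contributes $2^{1-\alpha}\,2^{\alpha v} = 2^{1-\alpha}e^{\alpha v \ln 2}$, not $e^{-\alpha v\ln 2}$, which is what produces the stated $((\alpha-1)\ln 2)^j$ terms after the Cauchy product.
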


In the case $\alpha = 1$ (corresponding to recursive trees), the expressions are undefined. 
Singular cases such as this---which will appear throughout this paper---can be derived by direct
calculations or as limits of the more general cases.

\begin{corollary}[\citet{moon02centroid}]
For recursive trees:
  \begin{equation*} \lim_{n\to\infty} \Pb_n(D \ge h) = \frac{(\ln2)^h}{h!} \end{equation*}
and
  \begin{equation*} \lim_{n\to\infty} \Pb_n(D = h) = \frac{(\ln2)^h}{h!} - \frac{(\ln2)^{h+1}}{(h+1)!}. \end{equation*}
\end{corollary}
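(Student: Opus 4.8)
The plan is to derive the recursive-tree formulae directly, specialising the generating-function computation behind Theorem~\ref{thm:P(D = h)} to $\alpha = 1$, where every expression collapses to a strikingly simple form. Both ingredients are already in hand. The path probability was computed above as $\lim_{n\to\infty}\Pb_n(\Lambda_k(1/2)) = I_{1/2}(k-1,1) = 2^{-(k-1)}$, and the depth probability generating function~\eqref{eqn:D_k pgf} specialises, via $1\rf{k-1} = (k-1)!$, to $v\rf{k-1}/(k-1)! = \binom{v+k-2}{k-1}$. Substituting both into~\eqref{eqn:C_n(v)} and passing to the limit should give
\begin{equation*}
C(v) = \lim_{n\to\infty} C_n(v) = \sum_{k\ge1} 2^{-(k-1)}\binom{v+k-2}{k-1}.
\end{equation*}

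First I would justify exchanging the limit $n\to\infty$ with the summation over $k$, which is precisely the purpose of Lemma~\ref{lem:P_n(Lambda_k(sigma)) bound}. At $\alpha = 1$ the uniform bound~\eqref{eqn:P_n(Lambda_k(1/2)) bound} reads $\Pb_n(\Lambda_k(1/2)) \le 2^{-(k-2)}$, so on any fixed disc in the $v$-plane each summand of $C_n(v)$ is dominated, uniformly in $n$, by the term of a convergent series, and dominated convergence applies. With the exchange secured, I would reindex by $m = k-1$ and recognise a generalised binomial series: $C(v) = \sum_{m\ge0}\binom{v+m-1}{m}2^{-m} = (1-\tfrac12)^{-v} = 2^v$.

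The conclusion then follows by coefficient extraction. Writing $2^v = e^{v\ln2} = \sum_{h\ge0}\frac{(\ln2)^h}{h!}v^h$ and reading off $[v^h]C(v) = \Pb(\mc{D}\ge h)$ yields $\Pb(\mc{D}\ge h) = (\ln2)^h/h!$, and the mass function follows by differencing, $\Pb(\mc{D}=h) = \Pb(\mc{D}\ge h) - \Pb(\mc{D}\ge h+1) = (\ln2)^h/h! - (\ln2)^{h+1}/(h+1)!$.

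I expect no serious obstacle: the sole analytic point is the interchange of limit and sum, already handled by the uniform bound, while everything else is the collapse of known series at $\alpha = 1$. As an independent check I would also recover the statement as the limit $\alpha\to1$ of Theorem~\ref{thm:P(D = h)}. Setting $\varepsilon = \alpha-1$, the bracketed factor $1 - 2^{1-\alpha}\sum_{j=0}^{h-1}((\alpha-1)\ln2)^j/j!$ equals $2^{-\varepsilon}\sum_{j\ge h}(\varepsilon\ln2)^j/j!$, since the complete series sums to $2^{\varepsilon}$; its leading term is $(\varepsilon\ln2)^h/h!$, and multiplying by $(\alpha/(\alpha-1))^h = \varepsilon^{-h}(1+O(\varepsilon))$ cancels the apparent singularity and leaves $(\ln2)^h/h!$ in the limit, in agreement with the direct computation.
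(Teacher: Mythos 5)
Your proposal is correct and takes essentially the same route as the paper: the paper likewise obtains this corollary by specialising the limiting generating function $C(v)$ of equation~\eqref{eqn:C(v)} to $\alpha=1$, where it collapses to $2^v$, and extracting $[v^h]2^v = (\ln 2)^h/h!$, with the uniform bound~\eqref{eqn:P_n(Lambda_k(1/2)) bound} justifying the limit--sum interchange (the paper does this via an explicit split of the sum at $K=\fl{n^{1/4-\varepsilon}}$ rather than by naming dominated convergence, but the two arguments are equivalent). Your closing $\alpha\to1$ consistency check also matches the paper's remark that such singular cases can be recovered as limits of the general formulae of Theorem~\ref{thm:P(D = h)}.
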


\begin{proof}[of Theorem~\ref{thm:P(D = h)}]
Letting $n$ be large, and fixing $K = \fl{n^{1/4-\varepsilon}}$ for an
arbitrarily small $\varepsilon > 0$, Corollary~\ref{cor:P_n(Lambda_k(1/2))} and
equation~\eqref{eqn:C_n(v)} above imply:
\begin{align*}
  C_n(v) = 1 &+ \sum_{k=2}^K I_{1/2}(k-1, \alpha)
        \lr*{1 + O\lr*{\frac{k^2}{\sqrt{n}}}}
        \frac{(\alpha v)\rf{k-1}}{\alpha\rf{k-1}} \\
    &+ \sum_{k=K+1}^{\cl{n/2}} \Pb_n(\Lambda_k(1/2))
        \frac{(\alpha v)\rf{k-1}}{\alpha\rf{k-1}}.
\end{align*}
As $n \to \infty$, assuming $\abs{v} < 1$, the second sum tends to $0$: recalling the inequalities~\eqref{eqn:P_n(Lambda_k(1/2)) bound} and~\eqref{eqn:binomial identity}, we get
\begin{align*}
    \sum_{k=K+1}^{\cl{n/2}} \abs*{\Pb_n(\Lambda_k(1/2))
      \frac{(\alpha v)\rf{k-1}}{\alpha\rf{k-1}}}
  &\le \sum_{k=K+1}^{\cl{n/2}} \frac{(\alpha v)\rf{k-1}}{(k-1)!} 2^{-(k-2)} \\
  &\le \sum_{k=K+1}^{\cl{n/2}} \frac{\alpha\rf{k-1}}{(k-1)!} 2^{-(k-2)} \\
  &= 2\sum_{k=K}^{\cl{n/2}-1} \binom{k-1+\alpha}{k} 2^{-k} \\
  &\le n \frac{\frac{n}{2} + \alpha}{\fl{n^{1/4-\varepsilon}}}
        \binom{\frac{n}{2} - 1 + \alpha}{\frac{n}{2}}
        2^{-\fl{n^{1/4-\varepsilon}}} \\
  &= O\lr*{\frac{n^{\alpha+3/4+\varepsilon}}{2^{\fl{n^{1/4-\varepsilon}}}}}
      \tendsto{n\to\infty} 0,
\end{align*}
where we have made use of the upper bound on $\Pb_n(\Lambda_k(1/2))$ given
in equation~\eqref{eqn:P_n(Lambda_k(1/2)) bound}, as well as the binomial
inequality~\eqref{eqn:binomial identity}. Similarly, the tail which replaces
this sum is negligible:
\begin{align*}
    \sum_{k>K} \abs*{I_{1/2}(k-1, \alpha)
        \frac{(\alpha v)\rf{k-1}}{\alpha\rf{k-1}}}
  &\le \sum_{k>K} \Beta(k-1, \alpha)^{-1}
        \int_0^{1/2} t^{k-2} (1-t)^{\alpha-1}\, dt \\
  &\le \alpha \sum_{k>K} \binom{\alpha+k-2}{k-2} 2^{-(k-1)} \\
  &= O\lr*{K^\alpha 2^{-K}} \tendsto{n\to\infty} 0.
\end{align*}
Thus, for values of $v$ within the unit circle, the pointwise limit of $C_n(v)$
is:
\begin{align}\label{eqn:C(v)}
    C(v) = \lim_{n\to\infty} C_n(v) &= 1 + \sum_{k\ge2} I_{1/2}(k-1, \alpha)
        \frac{\Gamma(\alpha v + k - 1) \Gamma(\alpha)}
        {\Gamma(\alpha v) \Gamma(\alpha+k-1)} \nonumber \\
  &= 1 + \sum_{k\ge2} \frac{\Gamma(\alpha v + k - 1)}
        {\Gamma(\alpha v) \Gamma(k-1)}
        \int_0^{1/2} t^{k-2} (1-t)^{\alpha-1}\, dt \nonumber \\
  &= 1 + \alpha v \int_0^{1/2} (1-t)^{\alpha-1} \sum_{k\ge2}
        \binom{\alpha v + k - 2}{k-2} t^{k-2}\, dt \nonumber \\
  &= 1 + \alpha v \int_0^{1/2} (1-t)^{-\alpha(v-1)-2}\, dt \nonumber \\
  &= 1 + \frac{\alpha v}{\alpha(v-1) + 1} \lr*{2^{\alpha(v-1)+1} - 1}.
\end{align}
Considering now the probability generating function involving the mass
$P_n(D = h)$---say $A_n(v)$---we see that this too converges:
\begin{align}\label{eqn:A(v)}
    A_n(v) = \sum_{h\ge0} \Pb_n(D = h) v^h
  &= C_n(v) - \frac{C_n(v)-1}{v} \nonumber \\
  &\to C(v) - \frac{C(v)-1}{v} \nonumber \\
  &= 1 + \frac{\alpha(v-1)}{\alpha(v-1) + 1} \lr*{2^{\alpha(v-1)+1} - 1} = A(v).
\end{align}
Since pointwise convergence of probability generating functions (in this case
$A_n(v) \to A(v)$) implies convergence in probability of their distributions
\citep[Theorem~IX.1]{flajolet09analytic}, we have the stated theorem via
\begin{math}\Pb_n(D \ge h) \to [v^h]C(v)\end{math}. The limiting mass function follows naturally.

Let us finally remark that for the special case $\alpha=1$, equation~\eqref{eqn:C(v)}
directly yields
\begin{equation*} \lim_{n\to\infty} \Pb_n(D \ge h) = [v^h] 2^v
  = \frac{(\ln2)^h}{h!}. \qedhere \end{equation*}
\end{proof}

\subsection{Moments of the depth distribution} 

To close our discussion of the centroid's depth, we consider the moments of
$D(T)$ as the size of the tree tends to infinity. More specifically,
with~$C_n(v)$ and~$A_n(v)$ as they were in the proof of
Theorem~\ref{thm:P(D = h)} (along with their respective limits), we are
interested in:
\begin{equation*} \lim_{n\to\infty} \Ex_n(D\ff{m})
  = \lim_{n\to\infty} \sum_{h\ge m} h\ff{m} \Pb_n(D = h)
  = \lim_{n\to\infty} A_n^{(m)}(1). \end{equation*}
We show firstly that the moments of $\Pb_n(D = h)$ converge to those of its
limiting distribution $\Pb(D = h)$, and then, instead of dealing with $A_n(v)$
directly, derive the moments' asymptotic behaviour using the limiting generating
function $A(v)$.

\begin{lemma}\label{lem:E(D) convergence}
The moments of the distribution of the centroid's depth $D(\mc{T}_n)$
converge to those of $\mc{D}$, i.e.:
\begin{equation*} \lim_{n\to\infty} \Ex_n(D^m) = \Ex(\mc{D}^m). \end{equation*}
\end{lemma}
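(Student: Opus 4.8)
The plan is to derive the claimed moment convergence from the convergence in distribution already established in Theorem~\ref{thm:P(D = h)}, supplemented by a domination (uniform integrability) argument. Theorem~\ref{thm:P(D = h)} gives $\Pb_n(D = h) \to \Pb(\mc{D} = h)$ for every fixed $h$, but pointwise convergence of the mass functions does not by itself control the moments: one must rule out an escape of mass to large depths as $n \to \infty$. What is needed is a tail bound on $\Pb_n(D \ge h)$ that decays fast enough in $h$ and, crucially, is uniform in~$n$. Such a bound is essentially free from the uniform path-probability estimate~\eqref{eqn:P_n(Lambda_k(1/2)) bound}.

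First I would observe that a node with label~$k$ has depth at most $k-1$, so the generating function~\eqref{eqn:D_k pgf} is a polynomial of degree $k-1$ in~$v$ and hence $\Pb(D_k = h) = 0$ whenever $k \le h$. Combined with $\Pb_n(D \ge h) = \sum_{k \ge 1} \Pb(D_k = h)\,\Pb_n(\Lambda_k(1/2))$ and the trivial bound $\Pb(D_k = h) \le 1$, this yields
\begin{equation*}
\Pb_n(D \ge h) \;\le\; \sum_{k \ge h+1} \Pb_n(\Lambda_k(1/2))
\;\le\; \sum_{k \ge h+1} \frac{\alpha\rf{k-1}}{(k-1)!}\,2^{-(k-2)} \;=:\; b_h,
\end{equation*}
where the last inequality is exactly the uniform bound~\eqref{eqn:P_n(Lambda_k(1/2)) bound}. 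The sequence $(b_h)$ is independent of~$n$, and since $\alpha\rf{k-1}/(k-1)! = \binom{\alpha+k-2}{k-1} = O(k^{\alpha-1})$, the ratio of consecutive summands tends to $1/2$, so $b_h = O(h^{\alpha-1} 2^{-h})$ decays geometrically. In particular $\sum_{h \ge 0} h^m b_h < \infty$ for every fixed~$m$.

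With this domination in place the conclusion is immediate. Since $\Pb_n(D = h) \le \Pb_n(D \ge h) \le b_h$ for all $n$ and $h$, and $\Pb_n(D = h) \to \Pb(\mc{D} = h)$, the dominated convergence theorem with respect to counting measure lets me interchange limit and summation,
\begin{equation*}
\lim_{n\to\infty} \Ex_n(D^m)
= \lim_{n\to\infty} \sum_{h \ge 0} h^m\,\Pb_n(D = h)
= \sum_{h \ge 0} h^m\,\Pb(\mc{D} = h)
= \Ex(\mc{D}^m),
\end{equation*}
the dominating function being $h \mapsto h^m b_h$. The only real content is the uniform tail bound of the second paragraph, and even that reduces directly to Lemma~\ref{lem:P_n(Lambda_k(sigma)) bound}; the remaining steps are routine. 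An alternative route, avoiding explicit domination, would note that the same geometric bound makes the functions $C_n(v) = \sum_{h} \Pb_n(D \ge h) v^h$ analytic and uniformly bounded on compact subsets of $\{\abs{v} < 2\}$, so that Vitali's theorem upgrades the pointwise convergence $C_n \to C$ on $\abs{v} < 1$ to locally uniform convergence, hence convergence of all derivatives at $v = 1$; the factorial moments $\Ex_n(D\ff{m}) = A_n^{(m)}(1)$ then converge, which is equivalent to the claim. The singular case $\alpha = 1$ requires no separate treatment, as the bound $b_h$ holds for every $\alpha > 0$.
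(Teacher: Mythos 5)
Your proof is correct, and it follows the same overall strategy as the paper: pointwise convergence of the mass functions from Theorem~\ref{thm:P(D = h)}, upgraded to moment convergence via dominated convergence with a dominating sequence that is uniform in~$n$ and traces back to the bound~\eqref{eqn:P_n(Lambda_k(1/2)) bound}. The one genuine difference is how the dominating sequence is produced. The paper bounds the mass function through a generating-function manipulation: it majorises each term of $C_n(v)$ coefficient-wise, sums the resulting series in closed form to get $[v^h]\,2^{1+\alpha v}$, and obtains the factorially decaying bound $\Pb_n(D = h) \le 2(\alpha\ln2)^h/h!$. You instead use the purely combinatorial observation that node~$k$ has depth at most $k-1$ (equivalently, that the generating function~\eqref{eqn:D_k pgf} is a polynomial of degree $k-1$), discard the depth distribution entirely via $\Pb(D_k = h) \le 1$, and sum the tail of~\eqref{eqn:P_n(Lambda_k(1/2)) bound} to get $b_h = O(h^{\alpha-1}2^{-h})$. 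Your bound is weaker (geometric rather than factorial decay) but still summable against any polynomial, which is all that is needed, and its derivation is more elementary --- no closed-form series identity or coefficient extraction is required. A further minor difference: the paper works with factorial moments $\Ex_n(D\ff{m})$ and converts at the end, while you handle ordinary powers $h^m$ directly; both are routine. Your alternative route via Vitali's theorem is also sound --- each $C_n$ is a polynomial, your bound $b_h$ gives uniform boundedness on compact subsets of $\{\abs{v}<2\}$, and pointwise convergence on $\abs{v}<1$ then yields convergence of all derivatives at $v=1$ --- and it is a genuinely different (if heavier) mechanism than the paper's, though of course it rests on the same tail estimate.
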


\begin{proof}
This follows from Lemma~\ref{lem:P_n(Lambda_k(sigma)) bound} and Lebesgue's
dominated convergence theorem, which states that if $(f_n)$ is a sequence of
real-valued functions, and $g$ a function such that $\abs*{f_n} \le g$ for all
$n$, then if $\int g < \infty$, one has \begin{math}\lim_{n\to\infty} \int f_n = \int
\lim_{n\to\infty} f_n\end{math}.

For our purposes, let \begin{math}f_n(h) = h\ff{m} \Pb_n(D = h)\end{math}. The bound on
$\Pb_n(\Lambda_k(1/2))$ given in equation~\eqref{eqn:P_n(Lambda_k(1/2)) bound}
then leads to a similar one (also uniform over $n$) on $\Pb_n(D = h)$, as
follows (recall that \begin{math}[v^h]C_n(v) = \Pb_n(D \ge h)\end{math}):
\begin{align*}
    \Pb_n(D = h) \le [v^h]C_n(v) &= [v^h]\sum_{k\ge1} \Pb_n(\Lambda_k(1/2))
        \frac{(\alpha v)\rf{k-1}}{\alpha\rf{k-1}} \\
  &\le [v^h]\sum_{k\ge1} \binom{\alpha v + k - 2}{k-1} 2^{-(k-2)} \\
  &= [v^h]2^{1+\alpha v} = 2\frac{(\alpha\ln2)^h}{h!}.
\end{align*}
Since the range of the random variable~$D$ is countable, the integrals in
Lebesgue's dominated convergence theorem become sums, and we are left with
\begin{equation*} 2\sum_{h\ge0} h\ff{m}\, \frac{(\alpha\ln2)^h}{h!}
  = (\alpha\ln2)^m\, 2^{\alpha+1} < \infty. \end{equation*}
Thus the factorial moments of the distributions $\Pb_n(D = h)$ converge to
those of $\Pb(\mc{D} = h)$, and since the usual higher-order moments
$\Ex(\mc{D}^m)$ are (finite) linear combinations of the factorial moments,
convergence holds for them as well.
\end{proof}

With convergence established, all that remains is to compute the moments of
$\mc{D}$ by making use of its probability generating function \begin{math}A(v) =
\sum_{h\ge0} \Pb(\mc{D} = h) v^h\end{math}, since \begin{math}\Ex(\mc{D}\ff{m}) =
A^{(m)}(1)\end{math}.

\begin{theorem}\label{thm:E(D_star)}
The limit of the $m$th factorial moment of the centroid's depth $D(\mc{T}_n)$ is
given by:
\begin{equation*} \Ex(\mc{D}\ff{m}) = m\alpha^{m} \lr*{2\sum_{j=0}^{m-2}
    \binom{m-1}{j} (-1)^j j!\, (\ln2)^{m-1-j} + (-1)^{m-1}(m-1)!}. \end{equation*}
In particular, the limits of its mean and variance are:
\begin{gather*}
    \Ex(\mc{D}) = \alpha, \\
    \Va(\mc{D}) = \alpha^2(4\ln2 - 3) + \alpha.
\end{gather*}
\end{theorem}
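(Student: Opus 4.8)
The plan is to exploit Lemma~\ref{lem:E(D) convergence}, which guarantees that the factorial moments of $D(\mc{T}_n)$ converge to those of the limit $\mc{D}$, so that it suffices to read off $\Ex(\mc{D}\ff{m}) = A^{(m)}(1)$ directly from the closed form of the limiting probability generating function $A(v)$ obtained in~\eqref{eqn:A(v)}. Since $A$ is analytic in a neighbourhood of $v = 1$ (its only singularity sits at $v = 1 - 1/\alpha$, a positive distance $1/\alpha$ away), these derivatives are well defined, and the task reduces to a coefficient extraction.

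The key simplification is the substitution $w = \alpha(v-1)$, under which $v = 1$ becomes $w = 0$ and each differentiation in $v$ contributes a factor $\alpha$, so that $A^{(m)}(1) = \alpha^m\, g^{(m)}(0) = \alpha^m\, m!\, [w^m] g(w)$, where $g(w) = \frac{w}{w+1}\bigl(2^{w+1} - 1\bigr)$ is the nonconstant part of $A$. I would then expand the two factors as power series about $w = 0$: the geometric series $\frac{w}{w+1} = \sum_{i \ge 1} (-1)^{i-1} w^i$ and the exponential series $2^{w+1} - 1 = 1 + 2\sum_{j \ge 1} \frac{(\ln 2)^j}{j!} w^j$. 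Multiplying these and collecting the coefficient of $w^m$ gives
\[
    [w^m] g(w) = (-1)^{m-1} + 2 \sum_{j=1}^{m-1} (-1)^{m-1-j} \frac{(\ln 2)^j}{j!},
\]
from which $\Ex(\mc{D}\ff{m}) = \alpha^m\, m!\, [w^m] g(w)$ follows immediately.

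All that remains is cosmetic: re-indexing the inner sum via $j \mapsto m-1-j$ and absorbing the factor $m!$ into $\binom{m-1}{j} j! = (m-1)!/(m-1-j)!$ recasts the expression in the binomial form stated in the theorem, with the isolated term $(-1)^{m-1}(m-1)!$ accounting for the contribution of $\frac{w}{w+1}$ alone. The mean and variance then follow by specialisation: substituting $m = 1$ gives $\Ex(\mc{D}) = \Ex(\mc{D}\ff{1}) = \alpha$, while $m = 2$ yields $\Ex(\mc{D}\ff{2}) = \alpha^2(4\ln 2 - 2)$, and combining these through $\Va(\mc{D}) = \Ex(\mc{D}\ff{2}) + \Ex(\mc{D}) - \Ex(\mc{D})^2$ produces $\alpha^2(4\ln 2 - 3) + \alpha$.

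I anticipate that the only genuine care is needed in the bookkeeping of signs and indices during the coefficient extraction and the subsequent re-indexing into binomial coefficients; the analytic content is entirely supplied by Lemma~\ref{lem:E(D) convergence} and the already-derived formula for $A(v)$, so there is no real obstacle beyond careful algebra. It is worth noting in passing that the resulting factorial-moment formula is a polynomial in $\alpha$ and $\ln 2$ with no singularity at $\alpha = 1$, so---unlike the distribution in Theorem~\ref{thm:P(D = h)}---it requires no separate treatment for the recursive case.
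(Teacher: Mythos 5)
Your proposal is correct and follows essentially the same route as the paper: convergence is supplied by Lemma~\ref{lem:E(D) convergence}, and the factorial moments are read off from the closed form~\eqref{eqn:A(v)}---the paper applies the threefold Leibniz rule to $\alpha(v-1)\cdot(1+\alpha(v-1))^{-1}\cdot(2^{\alpha(v-1)+1}-1)$ at $v=1$, which is the same computation as your coefficient extraction in $w=\alpha(v-1)$ with different bookkeeping, and your expression matches the stated formula after the re-indexing you describe (as do the $m=1$ and $m=2$ specialisations for the mean and variance). One cosmetic remark: $v = 1-1/\alpha$ is in fact a removable singularity, since $2^{\alpha(v-1)+1}-1$ vanishes there as well, so $A$ is entire---which only strengthens the analyticity at $v=1$ that your argument needs.
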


\begin{proof}
The calculation can be simplified slightly by writing the derivative of the
expression in equation~\eqref{eqn:A(v)} as:
\begin{align*}
    A^{(m)}(v) &= \frac{d^m}{dv^m} \lrb*{\alpha(v-1) \cdot
        \lr[\big]{1 + \alpha(v-1)}^{-1} \cdot \lr*{2^{\alpha(v-1)+1} - 1}} \\
  &= \frac{d^m}{dv^m} \lrb*{a(v) \cdot b(v) \cdot c(v)} \\
  &= \sum_{i+j+k=m} \binom{m}{i, j, k} a^{(i)}(v)\, b^{(j)}(v)\, c^{(k)}(v).
\end{align*}
Since we are interested in $v = 1$, note that $a'(1) = \alpha$, but
$a^{(i)}(1) = 0$ when $i \ne 1$. Furthermore, $b^{(j)}(1) = (-\alpha)^j j!$ for
$j \ge 0$, and $c^{(k)}(1) = 2(\alpha\ln2)^k$ for $k > 0$, whereas $c(1) = 1$.
This leads to:
\begin{align*}
    \Ex(\mc{D}\ff{m}) &= m\alpha\sum_{j+k=m-1} \binom{m-1}{j, k}
        b^{(j)}(1)\, c^{(k)}(1) \\
  &= m\alpha^{m} \lr*{2\sum_{j=0}^{m-2} \binom{m-1}{j} (-1)^j j!\,
        (\ln2)^{m-1-j} + (-1)^{m-1}(m-1)!}.
\end{align*}
The mean is computed more simply as $C(1) - 1 = \alpha$, and the second
factorial moment is \begin{math}\Ex(\mc{D}\ff{2}) = 2\alpha^2(2\ln2 - 1)\end{math}.
\end{proof}

Finally, we make two small remarks again: that the limits of the mean and
variance of the depth of the centroid in a random recursive tree ($1$ and $4\ln2
- 2$ respectively) were previously given by \citet{moon02centroid}; and that
Theorem~\ref{thm:E(D_star)} implies that the mean and variance of the
centroid's depth are greatest (in the limit) in the case of binary increasing
trees ($\alpha = 2$).

\section{The label of the centroid}
\label{sec:label}

Our second task regarding the centroid of an increasing tree is to describe its
label, which we will denote by~$L = L(T)$.

Since we will have no need for the general event $\Lambda_k(\sigma)$ throughout
this section, let us adopt the shorthand $\Lambda_k = \Lambda_k(1/2)$ to denote
the presence of label~$k$ on the path between the root and centroid nodes. A key
observation is that the event $L(T) = k$ can be expressed in terms of the
presence (or lack thereof) of nodes $k, k+1, \dotsc$ on this path, namely:
\begin{equation}\label{eqn:P_n(L = k)}
\begin{aligned}
    \Pb_n(L = k) = \Pb_n(\Lambda_k)
      &- \Pb_n(\Lambda_k \cap \Lambda_{k+1}) \\
      &- \Pb_n(\Lambda_k \cap \ol{\Lambda}_{k+1} \cap \Lambda_{k+2}) \\
      &- \Pb_n(\Lambda_k \cap \ol{\Lambda}_{k+1} \cap \ol{\Lambda}_{k+2} \cap \Lambda_{k+3}) \\
      &- \dotsb.
\end{aligned}
\end{equation}
Here $\ol{\Lambda}_l$ is the complement of $\Lambda_l$, i.e., it is the event
that node~$l$ is \emph{not} on the path to the centroid.
Equation~\eqref{eqn:P_n(L = k)} simply states that the centroid has label~$k$ if
and only if~$k$ is on the path to the centroid, but none of the nodes $k+1, k+2,
\dotsc$ are. One can write a similar expression for the probability that the
centroid's label is at least $k$:
\begin{equation}\label{eqn:P_n(L >= k)}
\begin{aligned}
    \Pb_n(L \ge k) = \Pb_n(\Lambda_k)
      &+ \Pb_n(\ol{\Lambda}_k \cap \Lambda_{k+1}) \\
      &+ \Pb_n(\ol{\Lambda}_k \cap \ol{\Lambda}_{k+1} \cap \Lambda_{k+2}) \\
      &+ \Pb_n(\ol{\Lambda}_k \cap \ol{\Lambda}_{k+1} \cap \ol{\Lambda}_{k+2} \cap \Lambda_{k+3}) \\
      &+ \dotsb.
\end{aligned}
\end{equation}
The composite event \begin{math}\Lambda_k \cap \ol{\Lambda}_{k+1} \cap \dotsb \cap
\ol{\Lambda}_{k+j-1} \cap \Lambda_{k+j}\end{math} in equation~\eqref{eqn:P_n(L = k)}
requires that $k$ and $k+j$ appear on the path to the centroid, but none of
$k+1, \dotsc, k+j-1$ do. This occurs if and only if $k+j$ is on the path and has
node~$k$ as its parent. (This is a simpler condition than the one required by
equation~\eqref{eqn:P_n(L >= k)}, which would be that node $k+j$ is on the path
and its parent is any one of the nodes $1, \dotsc, k-1$.)

Let $A_l(T)$ be the random variable that yields node~$l$'s parent, which, if we
consider the increasing tree's probabilistic growth process, is the node~$l$ was
`attached' to. Then we are interested in \begin{math}\Pb_n(\Lambda_{k+j} \cap
(A_{k+j} = k))\end{math}, for fixed $k$ and~$j$, as $n \to \infty$. Because the size of
the subtree consisting of node $k+j$ and its descendants is independent of the
node to which $k+j$ was attached (see Remark~\ref{rem:independence}), we have:
\begin{equation}\label{eqn:P_n(Lambda cap A)}
    \Pb_n\lr[\big]{\Lambda_{k+j} \cap (A_{k+j} = k)} = \Pb_n(\Lambda_{k+j})
        \Pb(A_{k+j} = k),
\end{equation}
where the second probability in the product is independent of the size~$n$ of
the tree. Since by Corollary~\ref{cor:P_n(Lambda_k(1/2))} we already know the
asymptotic behaviour of $\Pb_n(\Lambda_{k+j})$, 
we need an expression for the probability that node $k+j$ attaches to node~$k$.
Such an expression can be found in the literature:

\begin{lemma}[see \citet{dobrow96poisson,kuba10depths}]\label{lem:P(A = k)}
For $k, j \in \mb{Z}_{>0}$, the probability that the parent of node $k+j$ has
label~$k$ is given by:
\begin{equation*} \Pb(A_{k+j} = k) = \frac{\alpha k\rf{j-1}}{(\alpha+k-1)\rf{j}}, \end{equation*}
which does not depend on the size of the tree.
\end{lemma}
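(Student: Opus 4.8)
The plan is to work directly with the probabilistic growth process rather than with generating functions. Since the attachment probabilities depend only on $c_2/c_1 = \alpha - 1$, I may normalise $c_1 = 1$, so that a node of current out-degree $d$ carries attachment weight $\alpha - (\alpha-1)d$, while the total weight present when the $m$th node is inserted equals $\alpha + m - 2$ (this is just the restatement of the general rule with $c_1 = 1$). The crucial structural observation is that this weight is an \emph{affine} function of the out-degree: every time a child attaches to a fixed node, that node's out-degree rises by one and its weight drops by exactly the constant $\alpha - 1$.

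First I would track $W_m$, the attachment weight of node $k$ once $m$ nodes have been placed (for $m \ge k$), with initial value $W_k = \alpha$ because node $k$ enters with out-degree $0$. When the $(m+1)$th node arrives it attaches to $k$ with probability $W_m/(\alpha + m - 1)$, in which case $W_{m+1} = W_m - (\alpha - 1)$; otherwise $W_{m+1} = W_m$. Taking conditional expectations collapses the unknown out-degree of $k$ into the clean linear recursion $\Ex[W_{m+1}] = \frac{m}{\alpha + m - 1}\,\Ex[W_m]$, which telescopes to $\Ex[W_m] = \alpha \prod_{i=k}^{m-1} \frac{i}{\alpha + i - 1}$.

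Second, since $\Pb(A_{k+j} = k)$ is precisely the probability that the $(k+j)$th node attaches to $k$, it equals $\Ex[W_{k+j-1}]/(\alpha + k + j - 2)$. Substituting the telescoped product and recognising $\prod_{i=k}^{k+j-2} i = k\rf{j-1}$ together with $(\alpha + k + j - 2)\prod_{i=k}^{k+j-2}(\alpha + i - 1) = (\alpha+k-1)\rf{j}$ yields the stated formula $\alpha k\rf{j-1}/(\alpha+k-1)\rf{j}$. As a sanity check, setting $\alpha = 1$ collapses this to $1/(k+j-1)$, the correct uniform attachment probability for recursive trees.

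The main obstacle is conceptual rather than computational: the out-degree of node $k$ at the instant node $k+j$ arrives is itself a random quantity depending on how many of the intermediate nodes $k+1, \dots, k+j-1$ chose $k$, so a naive derivation would have to sum over this unknown distribution. The affineness of the weight is exactly what sidesteps the difficulty, because the constant per-attachment decrement $\alpha - 1$ makes the \emph{expected} weight satisfy an exact recursion requiring no knowledge of that distribution; moreover the independence of $n$ is manifest, since none of the quantities $W_m$, the recursion, or the final ratio refers to the eventual size of the tree.
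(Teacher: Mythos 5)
Your proof is correct, and it necessarily differs from the paper's treatment, because the paper does not prove this lemma at all: it imports it from the literature (Dobrow--Smythe for recursive trees, Kuba--Wagner for general very simple increasing trees, where the attachment probabilities arise in a generating-function analysis of node depths), offering only the $\alpha=1$ sanity check $1/(k+j-1)$ that you also perform. Your expected-weight argument checks out in detail: with $c_1=1$ the attachment weight of a node of out-degree $d$ is $\alpha-(\alpha-1)d$; the normaliser at the insertion of the $m$th node is the \emph{deterministic} quantity $\alpha+m-2$, since a tree on $m-1$ vertices has exactly $m-2$ edges, so the total weight is $(m-1)\alpha-(\alpha-1)(m-2)=\alpha+m-2$ regardless of the tree's shape --- this determinism is exactly why conditioning on $W_m$ alone suffices in your recursion, and deserves one explicit sentence; and the one-step identity $\Ex(W_{m+1}\mid W_m)=W_m\,m/(\alpha+m-1)$ is valid for either sign of $\alpha-1$, covering preferential attachment ($\alpha<1$) and $d$-ary saturation (where $W_m$ can reach $0$) alike, so the telescoped product together with the bookkeeping $\prod_{i=k}^{k+j-2}i=k\rf{j-1}$ and $(\alpha+k+j-2)\prod_{i=k}^{k+j-2}(\alpha+i-1)=(\alpha+k-1)\rf{j}$ yields the stated formula, including the base case $j=1$, where it reduces correctly to $\alpha/(\alpha+k-1)$. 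The one point you should make explicit is what ``does not depend on $n$'' means: $\Pb_n(A_{k+j}=k)$ concerns a weighted random tree of size $n$, and identifying it with the growth-process probability you computed uses the consistency of the process (the second and third properties in Lemma~\ref{lem:panholzer07}: the restriction of a random size-$n$ tree to labels $1,\dotsc,k+j$ has the law of the process run for $k+j$ steps). With that sentence added, your argument is a complete, elementary, self-contained substitute for the citation; the paper's route buys brevity, and the cited works deliver more (full depth distributions) than this single attachment probability.
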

Note that in the case of recursive trees ($\alpha = 1$), the
probability of a particular attachment is \begin{math}\Pb(A_{k+j} = k) = 1/(k+j-1)\end{math}, which
agrees with the family's growth process.

\subsection{A limiting distribution for the label of the centroid} 

Following on from equations~\eqref{eqn:P_n(L = k)} and~\eqref{eqn:P_n(Lambda
cap A)}, we can write the probability of the centroid assuming a certain
label~$k$ in terms of the events $\Lambda_k$ and $A_{k+j} = k$ (for which we
have closed-form expressions):
\begin{equation*} \Pb_n(L = k) = \Pb_n(\Lambda_k) - \sum_{j\ge1} \Pb_n(\Lambda_{k+j})
    \Pb_n(A_{k+j} = k). \end{equation*}
As a consequence, we arrive at the desired result of this section:

\begin{theorem}\label{thm:P(L = k)}
The label $L(\mc{T}_n)$ of the centroid node in a random tree of size~$n$
converges in probability to a discrete random variable $\mc{L}$ supported by
$\mb{Z}_{\ge0}$ and with mass function:
\begin{equation*} \Pb(\mc{L} = k) = \begin{cases}
    1 - \frac{\alpha}{\alpha-1} \lr*{1 - 2^{-(\alpha-1)}}
        &\text{ if } k = 1, \\
    I_{1/2}(k-1, \alpha) - \frac{\alpha}{\alpha-1} I_{1/2}(k, \alpha-1)
        &\text{ otherwise.} \end{cases} \end{equation*}
An alternative form, which holds for $k \ge 1$, is given by:
\begin{equation}\label{eqn:P(L = k)}
    \Pb(\mc{L} = k) = -\frac{1}{\alpha-1} I_{1/2}(k, \alpha)
  + \lr*{1 + \frac{\alpha}{\alpha-1}} \binom{\alpha+k-2}{k-1} 2^{-(\alpha+k-1)}.
\end{equation}
\end{theorem}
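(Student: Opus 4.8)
The plan is to pass to the limit $n\to\infty$ in the exact identity
\begin{equation*} \Pb_n(L = k) = \Pb_n(\Lambda_k) - \sum_{j\ge1} \Pb_n(\Lambda_{k+j})\, \Pb(A_{k+j} = k) \end{equation*}
term by term, and then to resum the limiting series in closed form. For fixed~$k$ the individual limits are supplied directly by the tools already assembled: Corollary~\ref{cor:P_n(Lambda_k(1/2))} gives $\Pb_n(\Lambda_{k+j}) \to I_{1/2}(k+j-1, \alpha)$, while $\Pb(A_{k+j}=k)$ is the $n$-independent quantity of Lemma~\ref{lem:P(A = k)}. The leading term contributes $I_{1/2}(k-1,\alpha)$, so the task reduces to showing
\begin{equation*} \sum_{j\ge1} I_{1/2}(k+j-1, \alpha)\, \Pb(A_{k+j} = k) = \frac{\alpha}{\alpha-1}\, I_{1/2}(k, \alpha-1). \end{equation*}

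Before any of this I would justify interchanging the limit with the infinite summation. Here the uniform exponential bound~\eqref{eqn:P_n(Lambda_k(1/2)) bound} of Lemma~\ref{lem:P_n(Lambda_k(sigma)) bound} is essential: it dominates $\Pb_n(\Lambda_{k+j})$, for every~$n$, by $\frac{\alpha\rf{k+j-1}}{(k+j-1)!}2^{-(k+j-2)}$, a sequence summable in~$j$ because its polynomially growing binomial factor is overwhelmed by the geometric decay. Since $\Pb(A_{k+j}=k)\le1$, the summands are dominated uniformly in~$n$ by a summable sequence, and dominated convergence for series permits the term-by-term passage to the limit.

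The resummation itself is the cleanest step. Writing $I_{1/2}(k+j-1,\alpha) = \Beta(1/2; k+j-1, \alpha)/\Beta(k+j-1, \alpha)$ and expanding both the beta functions and the rising factorials in $\Pb(A_{k+j}=k)$ into gamma functions, the factor $\Gamma(k+j-1+\alpha)$ from $1/\Beta(k+j-1,\alpha)$ cancels against $\Gamma(\alpha+k-1+j)$ from the attachment probability, and $\Gamma(k+j-1)$ cancels as well; what remains, $\alpha\Gamma(\alpha+k-1)/(\Gamma(\alpha)\Gamma(k))$, is independent of~$j$. Pulling this constant out, the $j$-sum of $t^{k+j-2}$ inside the integral $\int_0^{1/2}(\cdots)(1-t)^{\alpha-1}\,dt$ is a geometric series summing to $t^{k-1}/(1-t)$, which turns $(1-t)^{\alpha-1}$ into $(1-t)^{\alpha-2}$. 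Recognising the resulting integral as $\Beta(1/2;k,\alpha-1)$ and tidying the gamma prefactor yields exactly $\frac{\alpha}{\alpha-1}I_{1/2}(k,\alpha-1)$, establishing the main form for $k\ge2$; the case $k=1$ follows from $\Pb_n(\Lambda_1)=1$ together with the evaluation $I_{1/2}(1,\alpha-1)=1-2^{-(\alpha-1)}$.

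Finally, to obtain the alternative representation~\eqref{eqn:P(L = k)} I would pass between contiguous incomplete beta functions using two integrations by parts. Integrating $\Beta(1/2;k-1,\alpha)$ by parts (with $(1-t)^{\alpha-1}$ differentiated) gives the identity $I_{1/2}(k-1,\alpha) = I_{1/2}(k,\alpha-1) + \binom{\alpha+k-2}{k-1}2^{-(k+\alpha-2)}$, after which the two $I_{1/2}(k,\alpha-1)$ terms combine with prefactor $1-\frac{\alpha}{\alpha-1} = -\frac{1}{\alpha-1}$; a second relation, $I_{1/2}(k,\alpha-1) = I_{1/2}(k,\alpha) - \binom{\alpha+k-2}{k-1}2^{-(k+\alpha-1)}$, then trades the remaining $\alpha-1$ argument for~$\alpha$ and merges the two boundary terms into the single coefficient $1+\frac{\alpha}{\alpha-1}$. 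The main obstacle is the first, analytic step—securing a dominating sequence uniform in~$n$ so that the interchange of limit and sum is legitimate; the subsequent gamma-function bookkeeping and the two beta recurrences are routine but require care to keep the boundary terms straight.
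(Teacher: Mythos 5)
Your proposal is correct and follows essentially the same route as the paper: the same exact decomposition $\Pb_n(L=k)=\Pb_n(\Lambda_k)-\sum_{j\ge1}\Pb_n(\Lambda_{k+j})\Pb(A_{k+j}=k)$, the same gamma-function cancellation reducing each summand to $\frac{\alpha\Gamma(\alpha+k-1)}{\Gamma(\alpha)\Gamma(k)}\int_0^{1/2}t^{k+j-2}(1-t)^{\alpha-1}\,dt$ followed by the geometric resummation to $\frac{\alpha}{\alpha-1}I_{1/2}(k,\alpha-1)$, and the same contiguous-beta identities for the alternative form~\eqref{eqn:P(L = k)}. The only (cosmetic) difference is that you justify the term-by-term limit via dominated convergence against the uniform bound~\eqref{eqn:P_n(Lambda_k(1/2)) bound}, whereas the paper carries out the equivalent estimate by hand, splitting the sum at $k+J=\fl{n^{1/4-\varepsilon}}$ and bounding the tail and the extension to an infinite sum separately.
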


\begin{proof}
Recalling the asymptotic expression for $\Pb_n(\Lambda_k)$
(Corollary~\ref{cor:P_n(Lambda_k(1/2))}), assume now that $n$ is large, and fix
$J$ so that $k+J = \fl{n^{1/4-\varepsilon}}$, for some arbitrarily small,
positive $\varepsilon$. Then:
\begin{align*}
    \Pb_n(L = k) &= \Pb_n(\Lambda_k) - \sum_{j=1}^{\cl{n/2}-k}
        \Pb_n(\Lambda_{k+j}) \Pb_n(A_{k+j} = k) \\
  &= \begin{aligned}[t] \Pb_n(\Lambda_k) &- \sum_{j=1}^J I_{1/2}(k+j-1, \alpha)
        \frac{\alpha k\rf{j-1}}{(\alpha+k-1)\rf{j}}
        \lr*{1 + O\lr*{\frac{(k+j)^2}{\sqrt{n}}}} \\
      &- \sum_{j={J+1}}^{\cl{n/2}-k} \Pb_n(\Lambda_{k+j})
        \frac{\alpha k\rf{j-1}}{(\alpha+k-1)\rf{j}}, \end{aligned}
\end{align*}
As in the proof of Theorem~\ref{thm:P(D = h)}, which dealt with the depth of the
centroid, the upper bound for $\Pb_n(\Lambda_k)$ given in
equation~\eqref{eqn:P_n(Lambda_k(1/2)) bound} implies that the sum over larger
labels vanishes as $n$ grows:
\begin{align*}
    \sum_{j=J+1}^{\cl{n/2}-k} \Pb_n(\Lambda_{k+j}) \frac{\alpha k\rf{j-1}}
        {(\alpha+k-1)\rf{j}}
  &\le \sum_{j=J+1}^{\cl{n/2}-k} \frac{\alpha\rf{k+j-1}}{(k+j-1)!}
        \frac{\alpha k\rf{j-1}}{(\alpha+k-1)\rf{j}} 2^{-(k+j-2)} \\
  &= \frac{\alpha\rf{k-1}}{(k-1)!} \sum_{j=J+1}^{\cl{n/2}-k}
        \frac{\alpha}{k+j-1} 2^{-(k+j-2)} \\
  &\le \frac{\alpha\rf{k-1}}{(k-1)!} \frac{\alpha n}{\fl{n^{1/4-\varepsilon}}}
        2^{-\fl{n^{1/4-\varepsilon}}}
  \tendsto{n\to\infty} 0.
\end{align*}
Also, the extension of the first sum to an infinite one is permissible, since:
\begin{align*}
    \sum_{j>J} I_{1/2}(k+j-1, \alpha)
        \frac{\alpha k\rf{j-1}}{(\alpha+k-1)\rf{j}}
  &= \sum_{j>J} \frac{\alpha\Gamma(\alpha+k-1)}{\Gamma(\alpha)\Gamma(k)}
        \int_0^{1/2} t^{k+j-2} (1-t)^{\alpha-1}\, dt \\
  &\le \alpha\binom{\alpha+k-2}{k-1} \sum_{j>J} 2^{-(k+j-1)} \\
  &= O\lr*{2^{-(k+J-1)}} \tendsto{n\to\infty} 0.
\end{align*}
Letting $n \to \infty$, and assuming $k > 1$, we obtain the limiting probability
$\Pb(\mc{L} = k)$:
\begin{equation}\label{eqn:P(L = k) calculation}
\begin{aligned}
    \Pb(\mc{L} = k) &= \lim_{n\to\infty} \Pb_n(L = k) \\
  &= I_{1/2}(k-1, \alpha) - \sum_{j\ge1} I_{1/2}(k+j-1, \alpha)
        \frac{\alpha k\rf{j-1}}{(\alpha+k-1)\rf{j}} \\
  &= I_{1/2}(k-1, \alpha) - \alpha \frac{\Gamma(\alpha+k-1)}
        {\Gamma(k)\Gamma(\alpha)} \int_0^{1/2} (1-t)^{\alpha-1}
        \sum_{j\ge1} t^{k+j-2}\, dt \\
  &= I_{1/2}(k-1, \alpha) - \frac{\alpha}{\alpha-1}
        \Beta(k, \alpha-1)^{-1} \int_0^{1/2} t^{k-1} (1-t)^{\alpha-2}\, dt \\
  &= I_{1/2}(k-1, \alpha) - \frac{\alpha}{\alpha-1} I_{1/2}(k, \alpha-1).
\end{aligned}
\end{equation}
When $k = 1$, the first incomplete beta function is replaced by $1$. The
consolidated form given in equation~\eqref{eqn:P(L = k)} is due to the following
property of the incomplete beta function:
\begin{equation*}
    I_x(a-1, b) - \frac{x^{a-1} (1-x)^b}{(a-1) \Beta(a-1, b)}
  = I_x(a, b) = I_x(a, b-1) + \frac{x^a (1-x)^{b-1}}{(b-1) \Beta(a, b-1)},
\end{equation*}
\end{proof}

Repeating~\eqref{eqn:P(L = k) calculation} for $\alpha = 1$ yields:

\begin{corollary}[\citet{moon02centroid}]\label{cor:P(L = k)}
For recursive trees:
\begin{equation*} \lim_{n\to\infty} \Pb_n(L = k) = 2^{-(k-1)}
  - \sum_{j\ge k} \frac{2^{-j}}j. \end{equation*}
\end{corollary}

In particular (for recursive trees), $\lim_n \Pb_n(L = 1) = 1 - \ln2$.

\subsection{Moments of the label distribution} 

Just as we did when dealing with the depth of the centroid, we can apply
Lebesgue's dominated convergence theorem to prove convergence of the moments of
$\Pb_n(L = k)$ to those of $\Pb(\mc{L} = k)$, and then derive their limits using
the more convenient form of the limiting distribution. In the present case of
the centroid's label, however, the proof of convergence is almost immediate.

\begin{lemma}
The moments of the distribution of the centroid's label $L(\mc{T}_n)$ converge
to those of $\mc{L}$, i.e.:
\begin{equation*} \lim_{n\to\infty} \Ex_n(L^m) = \Ex(\mc{L}^m). \end{equation*}
\end{lemma}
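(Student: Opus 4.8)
The plan is to mirror the argument used for the depth moments in Lemma~\ref{lem:E(D) convergence}: establish a summable, $n$-independent dominating bound on $\Pb_n(L = k)$, invoke Lebesgue's dominated convergence theorem to push the limit inside the sum defining $\Ex_n(L^m)$, and then appeal to Theorem~\ref{thm:P(L = k)} for the pointwise limit $\Pb_n(L = k) \to \Pb(\mc{L} = k)$. The reason this is ``almost immediate'' is that a uniform exponential bound on the relevant probabilities is already in hand.

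First I would write $\Pb_n(L = k) \le \Pb_n(\Lambda_k)$, which is trivially true since $L = k$ implies that node~$k$ lies on the path to the centroid. Then I would apply the uniform bound from equation~\eqref{eqn:P_n(Lambda_k(1/2)) bound} directly:
\begin{equation*}
    \Pb_n(L = k) \le \Pb_n(\Lambda_k) \le \frac{\alpha\rf{k-1}}{(k-1)!} 2^{-(k-2)} =: g(k),
\end{equation*}
valid for all $n \ge 3$ and independent of~$n$. To confirm this bound is admissible as a dominating function, I would check that $\sum_{k \ge 1} k^m g(k) < \infty$ for every fixed~$m$: since $\alpha\rf{k-1}/(k-1)! = \binom{\alpha + k - 2}{k-1}$ grows only polynomially in~$k$ (of order $k^{\alpha - 1}$), the factor $2^{-(k-2)}$ dominates, so $k^m g(k)$ decays geometrically and the series converges. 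Concretely, one can recognise $\sum_{k\ge1} \binom{\alpha+k-2}{k-1} x^{k-1} = (1-x)^{-\alpha}$ and differentiate to handle the polynomial weight $k^m$ at $x = 1/2$, giving a finite closed-form bound.

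With the dominating function secured, I would set $f_n(k) = k^m \Pb_n(L = k)$, note $\abs{f_n(k)} \le k^m g(k)$ uniformly in~$n$, and invoke dominated convergence (with the counting measure on $\mb{Z}_{\ge0}$, so integrals are sums, exactly as in Lemma~\ref{lem:E(D) convergence}). This interchanges limit and sum to yield
\begin{equation*}
    \lim_{n\to\infty} \Ex_n(L^m) = \lim_{n\to\infty} \sum_{k\ge1} k^m \Pb_n(L = k)
      = \sum_{k\ge1} k^m \Pb(\mc{L} = k) = \Ex(\mc{L}^m),
\end{equation*}
where the pointwise convergence $\Pb_n(L = k) \to \Pb(\mc{L} = k)$ is precisely Theorem~\ref{thm:P(L = k)}. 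I anticipate no genuine obstacle here; the only point requiring mild care is verifying the summability of the dominating series against the polynomial weight $k^m$, but that is routine given the geometric decay. This is why the authors describe the convergence proof as almost immediate, in contrast to the depth case where the bound on $\Pb_n(D = h)$ had to be derived from the path-probability bound before dominated convergence could be applied.
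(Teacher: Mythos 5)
Your proposal is correct and follows essentially the same route as the paper's proof: bound $\Pb_n(L=k)$ by $\Pb_n(\Lambda_k)$, dominate it uniformly in $n$ via equation~\eqref{eqn:P_n(Lambda_k(1/2)) bound}, verify summability against the polynomial weight using the generating function $(1-u)^{-\alpha}$ evaluated at $u=1/2$, and conclude by dominated convergence together with the pointwise limit from Theorem~\ref{thm:P(L = k)}. The only cosmetic difference is that the paper works with factorial powers $k\ff{m}$ (making the closed-form evaluation of the dominating series via $\frac{d^m}{du^m}\lrb*{u(1-u)^{-\alpha}}$ slightly cleaner) where you use ordinary powers $k^m$, which changes nothing of substance.
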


\begin{proof}
The line of argument is the same as that which was used for
Lemma~\ref{lem:E(D) convergence}: we must find a uniform bound~$g(k)$ for
$k\ff{m} \Pb_n(L = k)$ such that $\sum_k k\ff{m}\, g(k)$ converges. Once again
by equation~\eqref{eqn:P_n(Lambda_k(1/2)) bound}:
\begin{align*}
    \sum_{k\ge1} k\ff{m} \Pb_n(L = k)
  &\le \sum_{k\ge m} k\ff{m} \Pb_n(\Lambda_k) \\
  &\le \sum_{k\ge m} \frac{k\ff{m}\, \alpha\rf{k-1}}{(k-1)!} 2^{-(k-2)} \\
  &= 2^{-(m-2)}\sum_{k\ge m-1}\binom{\alpha + k-1}{k} (k+1)\ff{m}\,
        2^{-(k-m+1)} \\
  &= 2^{-(m-2)}\frac{d^m}{du^m}\lrb*{u(1-u)^{-\alpha}}_{u=1/2} < \infty.
        \qedhere
\end{align*}
\end{proof}

\begin{theorem}\label{thm:E(L_star)}
The limit of the $m$th factorial moment of the centroid's label $L(\mc{T}_n)$ is
given by:
\begin{equation*} \Ex(\mc{L}\ff{m})
  = \frac{4m^2 + 2\alpha m + \alpha - 2}{m+1} \alpha\rf{m-1}. \end{equation*}
In particular, the limits of its mean and variance are:
\begin{gather*}
    \Ex(\mc{L}) = 1 + \frac32 \alpha, \\
    \Va(\mc{L}) = -\frac{7}{12} \alpha^2 + \frac{19}{6} \alpha.
\end{gather*}
\end{theorem}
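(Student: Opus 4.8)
The plan is to mirror the strategy used for the depth moments in Theorem~\ref{thm:E(D_star)}: having just established (in the preceding lemma) that the factorial moments of $L(\mc{T}_n)$ converge to those of $\mc{L}$, I would compute $\Ex(\mc{L}\ff{m}) = \sum_{k\ge1} k\ff{m} \Pb(\mc{L}=k)$ directly from the limiting mass function. The natural representation to start from is the alternative form~\eqref{eqn:P(L = k)}, which holds uniformly for all $k\ge1$ and splits $\Pb(\mc{L}=k)$ into an incomplete-beta term and a binomial term:
\begin{equation*}
    \Pb(\mc{L}=k) = -\frac{1}{\alpha-1} I_{1/2}(k,\alpha)
      + \lr*{1 + \frac{\alpha}{\alpha-1}} \binom{\alpha+k-2}{k-1} 2^{-(\alpha+k-1)}.
\end{equation*}
First I would handle the binomial piece. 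Summing $k\ff{m}\binom{\alpha+k-2}{k-1} 2^{-(\alpha+k-1)}$ over $k$ is precisely the kind of sum evaluated at the end of the convergence lemma above, namely a derivative of $u(1-u)^{-\alpha}$ (or a close variant) evaluated at $u=1/2$; this produces a closed expression in $\alpha$ and the rising factorial $\alpha\rf{m-1}$.

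For the incomplete-beta piece I would use the integral representation $I_{1/2}(k,\alpha) = \Beta(k,\alpha)^{-1}\int_0^{1/2} t^{k-1}(1-t)^{\alpha-1}\,dt$, interchange sum and integral (justified by the absolute convergence already exploited in the proof of Theorem~\ref{thm:P(L = k)}), and evaluate $\sum_{k\ge1} k\ff{m}\, t^{k-1}/\Beta(k,\alpha)$. Writing $\Beta(k,\alpha)^{-1} = \Gamma(k+\alpha)/(\Gamma(k)\Gamma(\alpha))$ turns the summand into a hypergeometric-type series in $t$ whose $m$th factorial-moment weighting $k\ff{m}$ shifts the index; after recognising the resulting series as a derivative of a power of $(1-t)$, the remaining $t$-integral over $[0,1/2]$ should again collapse to elementary terms plus the common factor $\alpha\rf{m-1}$. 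Combining the two pieces and simplifying the coefficient should yield $\frac{4m^2+2\alpha m+\alpha-2}{m+1}\,\alpha\rf{m-1}$.

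The main obstacle I anticipate is the bookkeeping in the incomplete-beta term: the factor $1/(\alpha-1)$ is singular at $\alpha=1$, so the two contributions must combine so that this pole cancels (as it must, since the recursive case $\alpha=1$ is finite), and tracking the boundary evaluation at $t=1/2$ against the lower limit $t=0$ while keeping the $(\alpha-1)^{-1}$ factors under control is where sign and index errors are most likely. Once the general factorial-moment formula is in hand, the mean follows from $\Ex(\mc{L}) = \Ex(\mc{L}\ff{1})$ and the variance from $\Va(\mc{L}) = \Ex(\mc{L}\ff{2}) + \Ex(\mc{L}\ff{1}) - \Ex(\mc{L}\ff{1})^2$, using $\alpha\rf{1}=\alpha$, $\alpha\rf{2}=\alpha(\alpha+1)$; substituting $m=1,2$ into the closed form and simplifying gives $1+\tfrac32\alpha$ and $-\tfrac{7}{12}\alpha^2+\tfrac{19}{6}\alpha$ respectively, which I would state as the specialisations.
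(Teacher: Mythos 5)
Your proposal follows essentially the same route as the paper's own proof: the paper likewise starts from the alternative form~\eqref{eqn:P(L = k)}, evaluates the binomial piece as $2^{-(\alpha+m-1)}\frac{d^m}{dt^m}\bigl[t(1-t)^{-\alpha}\bigr]_{t=1/2}$, and handles the incomplete-beta piece by interchanging sum and integral so that the series becomes $t^{m-1}\frac{d^m}{dt^m}\bigl[t(1-t)^{-(\alpha+1)}\bigr]$, with the remaining integrals collapsing to $\int_0^{1/2} t^m(1-t)^{-(m+2)}\,dt = \frac{1}{m+1}$ exactly as you anticipate. Your extraction of the mean and variance from the cases $m=1,2$ (including the formula $\Va(\mc{L}) = \Ex(\mc{L}\ff{2}) + \Ex(\mc{L}) - \Ex(\mc{L})^2$) is also correct and matches the paper.
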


\begin{proof}
The factorial moments of the limiting distribution can be computed directly
using equation~\eqref{eqn:P(L = k)}:
\begin{align*}
    \sum_{k\ge1} k\ff{m} \Pb(\mc{L} = k) &= -\frac{1}{\alpha-1}
        \sum_{k\ge1} k\ff{m}\, I_{1/2}(k, \alpha) \\
  &\qquad+ \lr*{1 + \frac{\alpha}{\alpha-1}} \sum_{k\ge1}
        \binom{\alpha+k-2}{k-1} k\ff{m}\, 2^{-(\alpha+k-1)} \\
  &= -\frac{1}{\alpha-1} \int_0^{1/2} (1-t)^{\alpha-1} \sum_{k\ge1}
        \binom{\alpha+k-1}{k-1} \alpha k\ff{m}\, t^{k-1}\, dt \\
  &\qquad+ \lr*{1 + \frac{\alpha}{\alpha-1}} \sum_{k\ge1}
        \binom{\alpha+k-2}{k-1} k\ff{m}\, 2^{-(\alpha+k-1)} \\
  &= -\frac{\alpha}{\alpha-1} \int_0^{1/2} (1-t)^{\alpha-1} t^{m-1}
        \frac{d^m}{dt^m} \lrb*{t(1-t)^{-(\alpha+1)}}\, dt \\
  &\qquad+ \lr*{1 + \frac{\alpha}{\alpha-1}} 2^{-(\alpha+m-1)}
        \frac{d^m}{dt^m} \lrb*{t(1-t)^{-\alpha}}_{t=1/2} \\
  &= -\frac{\alpha}{\alpha-1} \int_0^{1/2} \sum_{i=0}^1 \binom{m}{i}
        (\alpha+1)\rf{m-i}\, t^{m-i} (1-t)^{-(m+2-i)}\, dt \\
  &\qquad+ \lr*{1 + \frac{\alpha}{\alpha-1}} \lr*{\alpha\rf{m} +
        m\alpha\rf{m-1}}.
\end{align*}
Noting that the integrals within the sum are all of a common, solvable form:
\begin{equation*} \int_0^{1/2} t^m (1-t)^{-(m+2)}\, dt = \frac{1}{m+1}
    \lrb*{\lr*{\frac{t}{1-t}}^{m+1}}_{t=0}^{1/2} = \frac{1}{m+1}, \end{equation*}
the $m$th factorial moment reduces to:
\begin{align*}
    \sum_{k\ge1} k\ff{m} \Pb(\mc{L} = k) &= -\frac{\alpha}{\alpha-1}
        \lr*{\frac{(\alpha+1)\rf{m}}{m+1} + \frac{m(\alpha+1)\rf{m-1}}{m}} \\
  &+ \lr*{1 + \frac{\alpha}{\alpha-1}} \lr*{\alpha\rf{m} + m \alpha\rf{m-1}} \\
  &= \lr*{1 + \frac{\alpha}{\alpha-1}} m\alpha\rf{m-1} + 2\alpha\rf{m}
      - \frac{1}{\alpha-1} \frac{\alpha\rf{m+1}}{m + 1} \\
  &= \frac{4m^2 + 2\alpha m + \alpha - 2}{m+1} \alpha\rf{m-1}. \qedhere
\end{align*}
\end{proof}

Once again, the fact that the expected label of the centroid in a random
recursive tree tends to $5/2$ was first proved by \citet{moon02centroid}. And
lastly, but not unexpectedly, it follows from Theorem~\ref{thm:E(L_star)} that
binary increasing trees ($\alpha = 2$) lead to the greatest eventual mean and
variance.

\section{The size of the centroid's root branch}
\label{sec:root branch}

Our third and final set of results involving the centroid of an increasing tree
involve its ancestral branch. In a way this is the most interesting of the
centroid's branches, because its descendent branches behave mostly (in
particular, their number and sizes do) like those of the root branches of a
random increasing tree---albeit under the extra condition that no one branch
contains more than $\fl{n/2}$ nodes.

It is also worth noting that these results are interesting for another reason:
they can be contrasted with the case of simply generated trees. We have already
mentioned that almost all simply generated trees of size~$n$ have three large
centroid branches that together contain most of the tree's nodes, and in fact
\citet{meir02centroid} have proved (among other things) that the size of the
centroid's ancestral branch, divided by~$n$, tends to $\sqrt{2} - 1 \approx
0.414$ as $n \to \infty$ (independently of the specific family of simply
generated trees). Our main goal in this section is an analogue of this result
for increasing trees, however we will phrase it (analogously) in terms of the
size of the subtree rooted at the centroid. This sort of comparison was not
possible for the results of the previous two sections, simply due to the fact
that the depth and label (where applicable) of the centroid in a simply
generated tree are relatively uninformative, because roots or specifically
labelled nodes in simply generated trees are, for the most part, no different
from randomly selected nodes.

Let $S(T)$ denote the size of the subtree containing the centroid and all of its
descendent branches, and $\Pb_n(S = m = \fl{\theta n})$ the relevant probability
distribution. Since the ancestral branch contains at most $\fl{n/2}$ nodes, the
ranges of~$m$ and~$\theta$ are $\{\cl{n/2}, \dotsc, n\}$ and $[1/2, 1]$
respectively, with $m = n$ characterising the case in which the root and
centroid coincide (for which Theorem~\ref{thm:P(L = k)} already provides a
limiting probability).

\subsection{A preliminary equation} 

The event that the centroid's subtree is made up of~$m$ nodes (where $n/2 \le m
< n$) can be decomposed into a pair of simpler events: firstly, that the tree
contains a subtree of size~$m$ (there can be at most one); and secondly, given
the presence of such a subtree, that its root is the centroid. This second event
can be stated more explicitly as the case, in a tree of size~$m$, that the
root is the only node with at least $\fl{n/2}$ descendants.

It is here that we will make use of $\Pb_n(\Lambda_k(\sigma))$, which was
introduced in Section~\ref{sec:depth path} as a generalisation of the `path'
probability $\Pb_n(\Lambda_k(1/2))$. Let $X_m(T)$ mark the existence of a
subtree of size~$m$ in a tree~$T$, and let $F_j$ be the label of node~$j$'s
parent, so that $F_j = 1$ characterises the root's children. The probability
that the centroid's subtree contains exactly $m$ nodes can be expressed as:
\begin{align}\label{eqn:P_n(S = m)}
    \Pb_n(S = m) &= \Pb_n(X_m) \lr*{1 - \Pb_m\lr*{\ts \bigcup_{j\ge2}
        \Lambda_j\lr*{\frac{n}{2m}}}} \nonumber \\
  &= \Pb_n(X_m) \lr[\Bigg]{1 - \sum_{j=2}^m \Pb_m\lr*{F_j = 1 \cap
        \Lambda_j\lr*{\ts \frac{n}{2m}}}} \nonumber \\
  &= \Pb_n(X_m) \cdot \lr*{1 - A_m\lr*{\ts \frac{n}{2m}}},
\end{align}
where $A_m\lr*{\ts \frac{n}{2m}}$ is used as an abbreviation for \begin{math}\sum_{j=2}^m \Pb_m\lr*{F_j = 1 \cap
        \Lambda_j\lr*{\ts \frac{n}{2m}}}\end{math}.
The probabilities that appear within the sum
refer to disjoint events, since at most one of the subtree's root branches can
contain $\fl{n/2} + 1$ nodes. We note, as we did for
equation~\eqref{eqn:P_n(Lambda cap A)}, that the size of the subtree rooted at
$j$ is independent of the node~$j$ was attached to, so that:
\begin{equation*} \Pb_m\lr*{F_j = 1 \cap \Lambda_j\lr*{\ts \frac{n}{2m}}}
  = \Pb(F_j = 1) \Pb_m\lr*{\Lambda_j\lr*{\ts \frac{n}{2m}}}. \end{equation*}

Consider the first probability $\Pb_n(X_m)$ in equation~\eqref{eqn:P_n(S = m)},
of the event that a tree of~$n$ nodes contains a subtree of size~$m$. Since
there can be at most one, this probability can be rephrased as the expected
number of such subtrees. This problem in turn is known as the
\emph{subtree size profile} of a tree, and has recently been studied for various
families of increasing trees. In fact, the expected proportion of nodes with
$m-1$ descendants (each forming a rooted subtree of size~$m$) has already been
given explicitly for the most interesting families, see~\citep[Section~3 and
Theorem~4.1]{fuchs12limit}. Letting $U_m = U_m(T)$ denote the number of subtrees of
size~$m$ in a random tree, we perform the derivation in a more general way here.

\begin{lemma}\label{lem:E_n(U_m)}
For $1 \le m < n$, the expected number of subtrees of size~$m$ in a random very
simple increasing tree of size~$n$ is given by:
\begin{equation*} \Ex_n(U_m) = \frac{\alpha(\alpha+n-1)}{(\alpha+m)(\alpha+m-1)}. \end{equation*}
\end{lemma}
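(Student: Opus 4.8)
The plan is to combine linearity of expectation with the elementary fact that a subtree of size~$m$ is exactly the subtree rooted at some node~$k$ whose number of descendants equals $m-1$. Writing $U_m = \sum_{k\ge1}\mathbf{1}[\text{node }k\text{ has exactly }m-1\text{ descendants}]$---where the root $k=1$ never contributes, since its subtree is the whole tree and $m<n$---linearity gives
\[
    \Ex_n(U_m) = \sum_{k=2}^{n-m+1}
        \Pb_n(\text{node }k\text{ has }m-1\text{ descendants}).
\]
The upper limit $n-m+1$ reflects that the $m-1$ descendants of~$k$ must carry labels exceeding~$k$, of which there are only $n-k$.

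First I would read off the required single-node probability, which is nothing but the $m'=m-1$ summand already appearing in equation~\eqref{eqn:P_n(Lambda_k(sigma)) sum}:
\[
    \Pb_n(\text{node }k\text{ has }m-1\text{ descendants})
  = \binom{\alpha+m-2}{m-1}\binom{n-m-1}{k-2}
        \Big/ \binom{\alpha+n-2}{n-k}.
\]
Equivalently, this follows from the generating function $y^{(k)}(x) = y_k\,y'(x)(1-c_1x)^{-(k-1)}$ by extracting $[x^{m-1}]y'(x)$ (for the subtree at~$k$) and $[x^{n-k-m+1}](1-c_1x)^{-(k-1)}$ (for the remaining subtrees), dividing by $y_n=(n-k)!\,[x^{n-k}]y^{(k)}(x)$; the powers of~$c_1$ cancel and leave a clean ratio of rising factorials and factorials. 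Substituting $l=k-2$ then turns the target into
\[
    \Ex_n(U_m) = \binom{\alpha+m-2}{m-1}
        \frac{1}{\alpha\rf{n-1}\,\Gamma(\alpha)}
        \sum_{l=0}^{n-m-1} \Gamma(\alpha+l+1)\,(n-l-2)!\,\binom{n-m-1}{l}.
\]

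The core step, and the main obstacle, is evaluating this sum over~$l$, whose summand mixes two Gamma factors depending on~$l$ against a binomial coefficient. The key device is a Beta-integral representation: writing $\Gamma(\alpha+l+1)(n-l-2)! = \Gamma(\alpha+n)\,\Beta(\alpha+l+1,\,n-l-1)$ and expanding $\Beta(\alpha+l+1,n-l-1)=\int_0^1 t^{\alpha+l}(1-t)^{n-l-2}\,dt$, I split $(1-t)^{n-l-2}=(1-t)^{m-1}(1-t)^{n-m-1-l}$ so that the $l$-dependent factor becomes $\binom{n-m-1}{l}t^{l}(1-t)^{n-m-1-l}$, which collapses under the binomial theorem to~$1$. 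The surviving integral is $\int_0^1 t^{\alpha}(1-t)^{m-1}\,dt=\Beta(\alpha+1,m)$, so the entire sum equals $\Gamma(\alpha+n)\,\Beta(\alpha+1,m)$.

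Finally I would simplify. Using $\alpha\rf{n-1}\,\Gamma(\alpha)=\Gamma(\alpha+n-1)$, $\binom{\alpha+m-2}{m-1}=\Gamma(\alpha+m-1)/(\Gamma(m)\Gamma(\alpha))$, $\Beta(\alpha+1,m)=\Gamma(\alpha+1)\Gamma(m)/\Gamma(\alpha+m+1)$, together with $\Gamma(\alpha+n)/\Gamma(\alpha+n-1)=\alpha+n-1$ and $\Gamma(\alpha+m+1)=(\alpha+m)(\alpha+m-1)\Gamma(\alpha+m-1)$, every stray Gamma factor cancels and the expression reduces to $\alpha(\alpha+n-1)/\bigl((\alpha+m)(\alpha+m-1)\bigr)$, as claimed. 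Note that, unlike several later computations in the paper, the singular value $\alpha=1$ requires no separate treatment here, since nothing in the argument divides by $\alpha-1$.
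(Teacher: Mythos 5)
Your proposal is correct and follows essentially the same route as the paper: linearity of expectation over the labels $k=2,\dotsc,n-m+1$, with the exact probability that node~$k$ has $m-1$ descendants read off from equation~\eqref{eqn:P_n(Lambda_k(sigma)) sum}. The only divergence is in how the resulting sum is closed---the paper invokes the Chu--Vandermonde identity, while your Beta-integral representation combined with the binomial theorem amounts to a self-contained proof of that same identity---so the two arguments are effectively interchangeable.
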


\begin{proof}
Firstly, note that \begin{math}\Ex_n(U_m) = \sum_l \Pb_n(S_l = m)\end{math}, where~$S_l$ is the size
of the subtree rooted at~$l$. Now $\Pb_n(S_l = m)$ is the probabilty that node $l$
has $m-1$ descendants, which was derived in Section~\ref{sec:depth path} (see in
particular~\eqref{eqn:P_n(Lambda_k(sigma)) sum}) to be
\begin{equation*}
\Pb_n(S_l = m) = \binom{\alpha+m-2}{m-1} \binom{n-m-1}{l-2} \Bigg/ \binom{\alpha+n-2}{n-l},
\end{equation*}
which can be rewitten as
\begin{equation*}
\Pb_n(S_l = m) = \alpha\Beta(n-m, \alpha+m-1) \binom{\alpha+l-2}{l-2} \binom{n-l}{m-1}.
\end{equation*}

Summing over possible labels (the root is omitted since $m < n$) yields:
\begin{align*}
    \Ex_n(U_m) &= \sum_{l=2}^{n-m+1} \Pb_n(S_l = m) \\
  &= \alpha \Beta(n-m, \alpha+m-1)
        \sum_{l=2}^{n-m+1} \binom{\alpha+l-2}{l-2} \binom{n-l}{m-1} \\
  &= \alpha \Beta(n-m, \alpha+m-1) \binom{\alpha+n-1}{n-m-1}.
\end{align*}
in which the final step is due to the Chu-Vandermonde identity, once the
numerators of the binomial coefficients have been converted to constants:
\begin{equation*} \sum_{l=0}^{n-m-1} \binom{\alpha + l}{l} \binom{n-l-2}{m-1}
  = (-1)^{n-m-1} \sum_{l=0}^{n-m-1} \binom{-\alpha-1}{l}
    \binom{-m}{n-m-1-l}. \end{equation*}
The stated result is obtained after simplifying.
\end{proof}

\begin{corollary}\label{cor:P_n(X_m)}
For $n/2 \le m < n$, the probability that a tree of size~$n$ contains a subtree
of size~$m$ is:
\begin{equation*} \Pb_n(X_m) = \frac{\alpha(\alpha+n-1)}{(\alpha+m)(\alpha+m-1)}. \end{equation*}
\end{corollary}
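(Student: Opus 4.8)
The plan is to reduce the statement to Lemma~\ref{lem:E_n(U_m)} by observing that, in the regime $n/2 \le m < n$, the random variable $U_m$ counting subtrees of size~$m$ takes only the values $0$ and~$1$. Once this is established, $U_m$ is simply the indicator of the event $X_m$, so that $\Ex_n(U_m) = \Pb_n(U_m = 1) = \Pb_n(X_m)$, and the stated formula is immediate by substituting the expression from the lemma.

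The key step, then, is to argue that a tree of size~$n$ can contain \emph{at most one} subtree of size~$m$ whenever $m \ge n/2$ and $m < n$. I would consider two rooted subtrees, say those rooted at nodes $u$ and~$v$. Either one is nested inside the other (when one of $u, v$ is an ancestor of the other) or the two are disjoint. In the nested case, if $v$ is a proper descendant of~$u$ then the subtree rooted at~$u$ strictly contains the subtree rooted at~$v$ (it also contains~$u$ itself, which is not a descendant of~$v$), so the two cannot have equal size. In the disjoint case, the two subtrees share no nodes; since $m < n$ neither can equal the whole tree, so neither $u$ nor~$v$ is the root, and hence the root node lies in neither subtree. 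Together the two subtrees then occupy at most $n-1$ nodes, so two disjoint subtrees of size~$m$ would force $2m \le n-1$, contradicting $m \ge n/2$.

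With at most one such subtree present, we indeed have $U_m \in \{0, 1\}$ and $X_m = \{U_m = 1\}$, whence $\Pb_n(X_m) = \Ex_n(U_m)$ and Lemma~\ref{lem:E_n(U_m)} yields the claimed expression. The only genuine content lies in the uniqueness argument of the second paragraph; I expect this combinatorial observation (in particular ruling out the boundary case $m = n/2$ with $n$ even, where $2m = n$ but the root is excluded from both subtrees) to be the one point requiring care, while everything else follows directly from the preceding lemma.
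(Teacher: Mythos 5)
Your proposal is correct and takes essentially the same route as the paper: the paper likewise obtains the corollary from Lemma~\ref{lem:E_n(U_m)} via the observation (asserted there without detailed proof, ``since there can be at most one'') that $U_m \in \{0,1\}$ for $n/2 \le m < n$, so that $\Pb_n(X_m) = \Ex_n(U_m)$. Your uniqueness argument---nested subtrees have strictly different sizes, and disjoint ones exclude the root since $m < n$, forcing $2m \le n-1$ and so handling the boundary case $2m = n$---is a valid filling-in of that asserted step.
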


\subsection{The probability that the root of a subtree is the centroid} 

The second probability in equation~\eqref{eqn:P_n(S = m)}, denoted by $1 -
A_m(n/(2m))$, accounts for the cases in which the root of a subtree of size~$m$
is the centroid of the entire tree, where $m/n = \theta$ for some fixed
$\theta$. By the aforementioned independence argument, we have:
\begin{equation}\label{eqn:A_m(n/2m)}
    A_m\lr*{\ts \frac{n}{2m}}
  = \sum_{j=2}^m \Pb(F_j = 1) \Pb_m\lr*{\Lambda_j\lr*{\ts \frac{n}{2m}}},
\end{equation}
in which both of the terms contained within the sum are manageable---the first
by Lemma~\ref{lem:P(A = k)}:
\begin{equation*} \Pb(F_j = 1) = \frac{\alpha (j-2)!}{\alpha\rf{j-1}}
  = \alpha \Beta(j-1, \alpha), \end{equation*}
and the second due to Theorem~\ref{thm:P_n(Lambda_k(sigma))} and
Lemma~\ref{lem:P_n(Lambda_k(sigma)) bound}, which provide an asymptotic form and
an upper bound respectively.

\begin{lemma}\label{lem:A_m(n/2m)}
For $n/2 \le m < n$ in a tree of size~$n$, the probability that the root of a
subtree of size~$m$ is \emph{not} the centroid of the tree satisfies, for all
$0 < \varepsilon < 1/2$:
\begin{equation*} A_m\lr*{\ts \frac{n}{2m}} = \frac{\alpha}{\alpha-1}
    \lr*{1 - \lr*{\ts \frac{n}{2m}}^{\alpha-1}} + O\lr*{n^{-\varepsilon}}. \end{equation*}
\end{lemma}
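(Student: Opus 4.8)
The plan is to begin with the sum in~\eqref{eqn:A_m(n/2m)}, writing $\sigma = n/(2m)$ throughout, and to substitute the two closed forms already at hand: the attachment probability $\Pb(F_j = 1) = \alpha\Beta(j-1,\alpha)$, and the asymptotic expression for $\Pb_m(\Lambda_j(\sigma))$ supplied by Theorem~\ref{thm:P_n(Lambda_k(sigma))}. The key algebraic observation is that the two Beta factors cancel: since $I_{1-\sigma}(j-1,\alpha) = \Beta(1-\sigma;\,j-1,\alpha)/\Beta(j-1,\alpha)$, one gets
\begin{equation*}
\Pb(F_j=1)\,I_{1-\sigma}(j-1,\alpha) = \alpha\int_0^{1-\sigma} t^{j-2}(1-t)^{\alpha-1}\,dt.
\end{equation*}
This is precisely the summand that collapses the series into a geometric sum under the integral sign, which is the mechanism producing the closed form.

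Following the strategy used in the proofs of Theorems~\ref{thm:P(D = h)} and~\ref{thm:P(L = k)}, I would split the sum at a cutoff $J = \fl{m^{1/4-\varepsilon}}$, so that Theorem~\ref{thm:P_n(Lambda_k(sigma))}---with its error $O(j^2/\sqrt m)$, uniform in $\sigma$ away from $1$---governs the range $2\le j\le J$, while the tail $j>J$ is bounded by Lemma~\ref{lem:P_n(Lambda_k(sigma)) bound}. In the tail the product $\Beta(j-1,\alpha)\cdot\alpha\rf{j-1}/(j-1)!$ simplifies to $1/(j-1)$, so that
\begin{equation*}
\sum_{j>J}\Pb(F_j=1)\,\Pb_m\lr*{\Lambda_j(\sigma)} \le \frac{3\alpha}{\sigma}\sum_{j>J}\frac{(1-\sigma)^{j-1}}{j-1},
\end{equation*}
and since $1-\sigma\le 1/2$ this is $O\lr*{(1-\sigma)^J/J}$, which is super-polynomially small and thus negligible against $O(n^{-\varepsilon})$.

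On the range $2\le j\le J$, the error factor $1 + O(j^2/\sqrt m)$, multiplied by the geometrically decaying weight $\alpha\int_0^{1-\sigma} t^{j-2}(1-t)^{\alpha-1}\,dt = O\lr*{(1-\sigma)^{j-1}}$, contributes only $O(1/\sqrt m)\sum_j j^2(1-\sigma)^{j-1} = O(1/\sqrt m)$ overall. Extending the leading sum back to $j=\infty$ (another exponentially small error) and summing the geometric series inside the integral then yields
\begin{equation*}
\sum_{j\ge2}\alpha\int_0^{1-\sigma}t^{j-2}(1-t)^{\alpha-1}\,dt = \alpha\int_0^{1-\sigma}(1-t)^{\alpha-2}\,dt = \frac{\alpha}{\alpha-1}\lr*{1-\sigma^{\alpha-1}}.
\end{equation*}
Substituting $\sigma=n/(2m)$ gives the stated main term, with every accumulated error bounded by $O(1/\sqrt m) = O(n^{-\varepsilon})$ once $m = \Theta(n)$ and $0<\varepsilon<1/2$.

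The main obstacle will be the uniform control of the errors near the edges of the admissible regime. The expansion in Theorem~\ref{thm:P_n(Lambda_k(sigma))} is only uniform for $\sigma$ bounded away from $1$, i.e.\ for $m$ bounded away from $n/2$; since $\theta = m/n$ is treated as a fixed constant in $(1/2,1)$, the value $\sigma = 1/(2\theta)$ sits strictly inside $(1/2,1)$ and no difficulty arises, but one must resist pushing the claim into the extreme regime $m\approx n/2$ without additional work. A secondary point is that the closed form degenerates at $\alpha=1$ (recursive trees), where $\frac{\alpha}{\alpha-1}(1-\sigma^{\alpha-1})$ is to be read as its limit $-\ln\sigma$; this singular case is handled separately, exactly as the analogous cases elsewhere in the paper.
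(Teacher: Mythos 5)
Your proposal follows the paper's proof essentially step for step: the same cancellation $\Pb(F_j=1)\,I_{1-\sigma}(j-1,\alpha)=\alpha\int_0^{1-\sigma}t^{j-2}(1-t)^{\alpha-1}\,dt$, the same split at a sublinear cutoff with the range $j\le J$ handled by Theorem~\ref{thm:P_n(Lambda_k(sigma))} and the tail by Lemma~\ref{lem:P_n(Lambda_k(sigma)) bound} (via the identity $\alpha\Beta(j-1,\alpha)\,\alpha\rf{j-1}/(j-1)!=\alpha/(j-1)$), and the same geometric summation under the integral sign; the only slip is that your cutoff $J=\fl{m^{1/4-\varepsilon}}$ degenerates for $\varepsilon\ge 1/4$, where the paper's choice $J=\fl{m^{1/4-\varepsilon/2}}$ stays positive for all $\varepsilon<1/2$. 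This is harmless because your sharper bookkeeping---weighting each per-term error $O(j^2/\sqrt m)$ by the geometric factor $(1-\sigma)^{j-1}$ to obtain a total error $O(1/\sqrt m)$, slightly better than the paper's $O(J^2/\sqrt m)$---makes the $\varepsilon$-dependent cutoff unnecessary: a single fixed cutoff such as $J=\fl{m^{1/5}}$ already yields the claim uniformly for all $0<\varepsilon<1/2$.
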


\begin{proof}
The sum given in equation~\eqref{eqn:A_m(n/2m)} can be split at a value small
enough for Theorem~\ref{thm:P_n(Lambda_k(sigma))} to be applied, say $J =
\fl{m^{1/4-\varepsilon/2}}$ so that $J^2/\sqrt{m} = O(m^{-\varepsilon})$:
\begin{align*}
    A_m\lr*{\ts \frac{n}{2m}} &= \sum_{j=2}^J \Pb(F_j = 1) I_{1-\frac{n}{2m}}
      (j-1, \alpha) \lr*{1 + O\lr*{\frac{j^2}{\sqrt{m}}}} \\
  &+ \sum_{j=J+1}^{m} \Pb(F_j = 1) \Pb_m\lr*{\Lambda_j\lr*{\ts \frac{n}{2m}}}.
\end{align*}
Applying the bound of Lemma~\ref{lem:P_n(Lambda_k(sigma)) bound} affirms that
the second sum is small for large values of~$J$:
\begin{align*}
    \sum_{j=J+1}^m \Pb(F_j = 1) \Pb_m\lr*{\Lambda_j\lr*{\ts \frac{n}{2m}}}
  &\le 6\alpha \frac{m}{n} \sum_{j=J+1}^m \Beta(j-1, \alpha)
        \frac{\alpha\rf{j-1}}{(j-1)!} \lr*{1 - \frac{n}{2m}}^{j-1} \\
  &\le 6\alpha \frac{m}{n} \sum_{j=J+1}^m \lr*{1 - \frac{n}{2m}}^{j-1} \\
  &< 12\alpha \lr*{\frac{m}{n}}^2 \lr*{1 - \frac{n}{2m}}^J
  \tendsto{m\to\infty} 0,
\end{align*}
Similarly, extending the first sum to an infinite one has an effect that
vanishes as~$m$ and~$n$ grow:
\begin{align*}
    \sum_{j>J} \Pb(F_j = 1) I_{1-\frac{n}{2m}}(j-1, \alpha)
  &= \alpha \sum_{j>J} \int_0^{1-\frac{n}{2m}} t^{j-2}
        (1-t)^{\alpha-1}\, dt \\
  &\le \alpha \sum_{j>J} \lr*{1 - \frac{n}{2m}}^{j-1}
  \tendsto{m\to\infty} 0.
\end{align*}
Combined, these two substitutions give the asymptotic behaviour of
$A_m(\frac{n}{2m})$:
\begin{align*}
    A_m\lr*{\ts \frac{n}{2m}} &= \alpha \sum_{j=2}^J \int_0^{1-\frac{n}{2m}}
        t^{j-2} (1-t)^{\alpha-1}\, dt \lr*{1 + O\lr*{\frac{J^2}{\sqrt{m}}}}
      + O\lr*{\lr*{1 - \frac{n}{2m}}^J} \\
  &= \alpha \int_0^{1-\frac{n}{2m}} (1-t)^{\alpha-2}\, dt
        \lr*{1 + O\lr*{m^{-\varepsilon}}} \\
  &= \frac{\alpha}{\alpha-1} \lr*{1 - \lr*{\frac{n}{2m}}^{\alpha-1}}
      + O\lr*{n^{-\varepsilon}},
\end{align*}
which can be compared to the case $m = n$ as given in
Theorem~\ref{thm:P(L = k)}.
\end{proof}

When dealing with recursive trees, the final step is slightly different,
resulting in:

\begin{corollary}
For recursive trees:
  \begin{equation*} A_m\lr*{\ts \frac{n}{2m}} \sim \ln\lr*{\ts \frac{2m}{n}}. \end{equation*}
\end{corollary}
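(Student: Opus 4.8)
The plan is to re-run the argument of Lemma~\ref{lem:A_m(n/2m)} almost verbatim, since the value $\alpha = 1$ is singular only in the very last step, where the closed form $\frac{\alpha}{\alpha-1}\lr*{1 - \lr*{\ts\frac{n}{2m}}^{\alpha-1}}$ degenerates into a $0/0$ expression. Starting from equation~\eqref{eqn:A_m(n/2m)}, I would again split the sum $\sum_{j=2}^m \Pb(F_j=1)\,\Pb_m\lr*{\Lambda_j\lr*{\ts\frac{n}{2m}}}$ at $J = \fl{m^{1/4-\varepsilon/2}}$. The two auxiliary estimates needed to control the split---that the tail $\sum_{j>J}$ is negligible, and that extending the leading portion to an infinite sum introduces only a vanishing correction---rely on the upper bound of Lemma~\ref{lem:P_n(Lambda_k(sigma)) bound} and on the geometric decay $\lr*{1 - \frac{n}{2m}}^{j-1}$, both of which hold for every $\alpha > 0$ and in particular at $\alpha = 1$. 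Consequently those steps carry over unchanged, and I am left only to evaluate the leading term at $\alpha = 1$.

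For that term, specialising Lemma~\ref{lem:P(A = k)} and Theorem~\ref{thm:P_n(Lambda_k(sigma))} at $\alpha = 1$ gives $\Pb(F_j = 1) = \Beta(j-1, 1) = 1/(j-1)$ and $I_{1-n/(2m)}(j-1, 1) = \lr*{1 - \frac{n}{2m}}^{j-1}$. I would therefore compute the dominant contribution directly as the power series
\begin{equation*}
  A_m\lr*{\ts\frac{n}{2m}}
    = \sum_{j\ge2} \frac{1}{j-1}\lr*{1 - \frac{n}{2m}}^{j-1}\lr*{1 + O\lr*{m^{-\varepsilon}}}
    = -\ln\lr*{\frac{n}{2m}}\lr*{1 + O\lr*{m^{-\varepsilon}}},
\end{equation*}
using $\sum_{i\ge1} x^i/i = -\ln(1-x)$ with $x = 1 - \frac{n}{2m}$. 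Equivalently, one may integrate the density appearing at the end of Lemma~\ref{lem:A_m(n/2m)}: at $\alpha = 1$ the integrand $(1-t)^{\alpha-2}$ becomes $(1-t)^{-1}$, so $\int_0^{1-n/(2m)} (1-t)^{-1}\,dt = \ln\lr*{\frac{2m}{n}}$. A third route is to read the general formula as a limit, $\lim_{\alpha\to1}\frac{1 - (n/(2m))^{\alpha-1}}{\alpha-1} = -\ln\lr*{\frac{n}{2m}}$ by L'H\^opital; all three routes yield the same leading term.

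The one point requiring care---and the reason the statement is phrased with $\sim$ rather than with an exact additive expansion---is that the error $O\lr*{m^{-\varepsilon}}$ inherited from the split must be negligible relative to the main term $\ln\lr*{\frac{2m}{n}}$. This holds precisely when $m/n = \theta$ is bounded away from $1/2$, so that $\ln\lr*{\frac{2m}{n}} = \ln(2\theta)$ stays bounded away from $0$; in that regime the ratio $A_m\lr*{\ts\frac{n}{2m}}\big/\ln\lr*{\frac{2m}{n}}$ tends to~$1$, which is the asserted asymptotic equivalence. I would flag explicitly that the approximation degenerates as $\theta \downarrow 1/2$, where both the main term and the additive error tend to~$0$, so that the $\sim$ claim is meaningful only for $\theta$ in a compact subinterval of $(1/2, 1]$---exactly the regime relevant to the subsequent analysis of the centroid's root branch.
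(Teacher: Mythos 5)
Your proposal is correct and takes essentially the same route as the paper: the paper also obtains this corollary by re-running the proof of Lemma~\ref{lem:A_m(n/2m)} with $\alpha = 1$, noting that only the final step changes, where the integral $\alpha\int_0^{1-n/(2m)}(1-t)^{\alpha-2}\,dt$ becomes $\int_0^{1-n/(2m)}(1-t)^{-1}\,dt = \ln\lr*{\frac{2m}{n}}$ (equivalently your harmonic power series $\sum_{j\ge 2}\frac{1}{j-1}\lr*{1-\frac{n}{2m}}^{j-1}$). Your extra caveat that the $\sim$ is only meaningful when $m/n$ is bounded away from $1/2$, so that the $O(m^{-\varepsilon})$ error does not swamp the vanishing main term, is a sound observation that the paper leaves implicit.
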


\subsection{The distribution of the size of the centroid's subtree} 

Now that $\Pb_n(X_m)$ is known explicitly (Corollary~\eqref{cor:P_n(X_m)}), and
$A_m(\frac{n}{2m})$ asymptotically, we are ready to derive an expression for the
distribution of $S(T)$. In the light of equation~\eqref{eqn:P_n(S = m)}, which
states that:
\begin{equation*} \Pb_n(S = m) = \Pb_n(X_m) \cdot \lr*{1 - A_m\lr*{\ts \frac{n}{2m}}}, \end{equation*}
we have the main theorem of this section:

\begin{lemma}\label{lem:P_n(S = m)}
For $n/2 \le m < n$ and any $0 < \varepsilon < 1/2$, the probability that the
centroid has $m-1$ descendent nodes is given by:
\begin{equation*} \Pb_n(S = m) = \frac{4}{n} \frac{\alpha}{\alpha-1}
    \lr*{\alpha \lr*{\frac{n}{2m}}^{\alpha+1} - \lr*{\frac{n}{2m}}^2}
  + O\lr*{n^{-1-\varepsilon}}. \end{equation*}
\end{lemma}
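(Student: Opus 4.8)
The plan is to combine the two ingredients that have just been established—namely the explicit formula for $\Pb_n(X_m)$ from Corollary~\ref{cor:P_n(X_m)} and the asymptotic form of $A_m(\frac{n}{2m})$ from Lemma~\ref{lem:A_m(n/2m)}—directly into the factorisation $\Pb_n(S = m) = \Pb_n(X_m) \cdot (1 - A_m(\frac{n}{2m}))$ provided by equation~\eqref{eqn:P_n(S = m)}. First I would substitute the exact expression
\begin{equation*}
\Pb_n(X_m) = \frac{\alpha(\alpha+n-1)}{(\alpha+m)(\alpha+m-1)}
\end{equation*}
and the asymptotic expansion
\begin{equation*}
1 - A_m\lr*{\ts \frac{n}{2m}} = 1 - \frac{\alpha}{\alpha-1}\lr*{1 - \lr*{\ts \frac{n}{2m}}^{\alpha-1}} + O\lr*{n^{-\varepsilon}}.
\end{equation*}

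The bulk of the work is then an asymptotic simplification under the regime $n/2 \le m < n$, i.e.\ $m = \Theta(n)$ with $n/(2m) \in (1/2, 1]$. I would first handle the rational prefactor $\Pb_n(X_m)$: since $\alpha + n - 1 = n(1 + O(1/n))$ and $(\alpha+m)(\alpha+m-1) = m^2(1 + O(1/m))$, this factor equals $\frac{\alpha n}{m^2}(1 + O(1/n))$, which I would rewrite as $\frac{4\alpha}{n}\lr*{\frac{n}{2m}}^2(1 + O(1/n))$ to match the target form. Multiplying by the bracketed factor, the leading constant $1$ and the constant $\frac{\alpha}{\alpha-1}$ combine, and after collecting terms the expression should consolidate to
\begin{equation*}
\frac{4}{n}\frac{\alpha}{\alpha-1}\lr*{\alpha\lr*{\frac{n}{2m}}^{\alpha+1} - \lr*{\frac{n}{2m}}^2},
\end{equation*}
with the algebra hinging on the identity $1 - \frac{\alpha}{\alpha-1} = -\frac{1}{\alpha-1}$ so that the $\lr*{\frac{n}{2m}}^2$ term and the $\lr*{\frac{n}{2m}}^{\alpha+1}$ term (arising from $\lr*{\frac{n}{2m}}^2 \cdot \lr*{\frac{n}{2m}}^{\alpha-1}$) emerge with the claimed coefficients.

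The main obstacle, and the step demanding the most care, is tracking the error terms so that the final bound is $O(n^{-1-\varepsilon})$ rather than merely $O(n^{-\varepsilon})$. The error in $1 - A_m(\frac{n}{2m})$ is $O(n^{-\varepsilon})$, but it is multiplied by the prefactor $\Pb_n(X_m) = \Theta(1/n)$, so the product of the leading part of the prefactor with this error contributes $O(n^{-1-\varepsilon})$, exactly as required. Likewise the $O(1/n)$ relative error from approximating the rational prefactor, when multiplied against the $O(1)$ bracketed quantity and the $\Theta(1/n)$ scale, yields a contribution of order $O(n^{-2})$, which is absorbed into $O(n^{-1-\varepsilon})$ provided $\varepsilon < 1$. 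I would verify that all cross terms are dominated by $O(n^{-1-\varepsilon})$ uniformly in $\theta = m/n \in [1/2, 1)$, using that $n/(2m)$ stays bounded within $(1/2, 1]$ so no hidden blow-up occurs as $m$ approaches $n/2$ or $n$. With the error accounting in hand, the result follows by direct substitution and collection of terms.
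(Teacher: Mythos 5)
Your proposal is correct and follows essentially the same route as the paper's proof: substituting Corollary~\ref{cor:P_n(X_m)} and Lemma~\ref{lem:A_m(n/2m)} into the factorisation~\eqref{eqn:P_n(S = m)}, simplifying the rational prefactor to $\frac{\alpha}{n}\lr*{\frac{n}{m}}^2$ up to a relative $O(1/n)$ error, and observing that the $O(n^{-\varepsilon})$ error from Lemma~\ref{lem:A_m(n/2m)} is damped by the $\Theta(1/n)$ prefactor to give the stated $O(n^{-1-\varepsilon})$. Your explicit error accounting and the remark on uniformity over $m/n \in [1/2,1)$ are slightly more detailed than the paper's three-line computation, but the substance is identical.
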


\begin{proof}
The result is simply an application of Corollary~\eqref{cor:P_n(X_m)} and
Lemma~\ref{lem:A_m(n/2m)} to the above expression:
\begin{align*}
    \Pb_n(S = m)
  &= \lr*{1 - \frac{\alpha}{\alpha-1} \lr*{1 - \lr*{\frac{n}{2m}}^{\alpha-1}}
      + O\lr*{n^{-\varepsilon}}} \frac{\alpha (\alpha+n-1)} {(\alpha+m)
        (\alpha+m-1)} \\
  &= \lr*{1 - \frac{\alpha}{\alpha-1} \lr*{1 - \lr*{\frac{n}{2m}}^{\alpha-1}}}
        \frac{\alpha}{n} \lr*{\frac{n}{m}}^2 + O\lr*{n^{-1-\varepsilon}} \\
  &= \frac{4}{n} \frac{\alpha}{\alpha-1}
          \lr*{\alpha \lr*{\frac{n}{2m}}^{\alpha+1}
        - \lr*{\frac{n}{2m}}^2} + O\lr*{n^{-1-\varepsilon}}. \qedhere
\end{align*}
\end{proof}

Combined with the special case $m = n$ (Theorem~\ref{thm:P(L = k)}), this
asymptotically describes the size of the centroid's subtree, and thus the
size $n-m$ of its root branch as well.

As with the distributions of the centroid's depth and label, we can also show
convergence to a limiting distribution; however in this case, although the
finite probability distributions are discrete, the limiting distribution is a
mixture of a continuous distribution with support $[1/2, 1)$ and a point measure
at~$1$. The notion of convergence is also slightly different, in that it is
weaker than those of the previous two sections.

\begin{theorem}\label{thm:P(S = theta)}
The proportion $S(\mc{T}_n)/n$ of nodes accounted for by the subtree consisting
of the centroid and all of its descendants in a random tree of size~$n$
converges in distribution to the random variable $\mc{S}$, defined on $[1/2, 1)$
by the density:
\begin{equation*} f(\theta) = 4 \frac{\alpha}{\alpha-1}
    \lr*{\alpha (2\theta)^{-(\alpha+1)} - (2\theta)^{-2}}, \end{equation*}
and at the boundary $\theta = 1$ by the point measure:
\begin{equation*} \Pb(\mc{S} = 1) = 1 - \frac{\alpha}{\alpha-1} \lr*{1 - 2^{-(\alpha-1)}}. \end{equation*}
\end{theorem}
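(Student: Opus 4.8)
The plan is to establish \emph{weak} convergence (convergence in distribution) rather than the convergence in probability used in Sections~\ref{sec:depth} and~\ref{sec:label}. This is the right notion here because for each $n$ the scaled size $S(\mc{T}_n)/n$ is a discrete random variable supported on $\{\cl{n/2}/n, \dots, 1\}$, and as $n \to \infty$ its mass spreads out into a continuous profile on $[1/2,1)$ while retaining an atom at $1$; no single random variable can serve as a limit in probability. Concretely, I would show that for every bounded continuous test function $g$ on $[1/2,1]$,
\begin{equation*}
  \Ex_n\lr[\big]{g(S/n)} = \sum_{m=\cl{n/2}}^{n} g(m/n)\, \Pb_n(S = m)
  \tendsto{n\to\infty} \int_{1/2}^1 g(\theta) f(\theta)\, d\theta
    + g(1)\,\Pb(\mc{S} = 1),
\end{equation*}
which is exactly weak convergence to the stated mixed law; equivalently, one verifies that the distribution function $\Pb_n(S/n \le \theta_0)$ converges to $\int_{1/2}^{\theta_0} f$ at every $\theta_0 \in [1/2,1)$, these being the continuity points of the limit.

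I would split the sum at $m = n$. For the bulk $\cl{n/2} \le m < n$, Lemma~\ref{lem:P_n(S = m)} gives $\Pb_n(S = m) = \tfrac1n f(m/n) + O(n^{-1-\varepsilon})$ with $f$ as stated, so that
\begin{equation*}
  \sum_{m=\cl{n/2}}^{n-1} g(m/n)\, \Pb_n(S = m)
    = \frac1n \sum_{m=\cl{n/2}}^{n-1} g(m/n)\, f(m/n)
    + O\lr*{n^{-\varepsilon}},
\end{equation*}
the error arising because there are $O(n)$ summands, each contributing $O(n^{-1-\varepsilon})$ (the bounded factor $g(m/n)$ does not affect the order). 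The leading term is a Riemann sum for the integrable function $g f$ on $[1/2,1]$ and therefore converges to $\int_{1/2}^1 g(\theta) f(\theta)\, d\theta$.

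For the atom I would observe that $S = n$ holds precisely when the centroid coincides with the root, i.e.\ when $L = 1$; hence $\Pb_n(S = n) = \Pb_n(L = 1)$, and Theorem~\ref{thm:P(L = k)} already supplies the limit $\Pb(\mc{S} = 1) = 1 - \tfrac{\alpha}{\alpha-1}\lr*{1 - 2^{-(\alpha-1)}}$. As a consistency check confirming that no mass escapes in the limit, a direct integration (substituting $u = 2\theta$) yields $\int_{1/2}^1 f(\theta)\, d\theta = \tfrac{\alpha}{\alpha-1}\lr*{1 - 2^{1-\alpha}}$, which together with the atom sums to exactly $1$.

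The main obstacle is technical rather than conceptual: converting the sum into an integral requires the error in Lemma~\ref{lem:P_n(S = m)} to be \emph{uniform} in $m$, and one must rule out mass escaping to the endpoint $\theta \to 1$ as the discrete law transitions into the atom. The most economical way to close this gap is to avoid a delicate estimate near $\theta = 1$ altogether: the Riemann-sum argument already yields convergence on every compact interval $[1/2, 1-\delta] \subset [1/2,1)$, while the atom is pinned down exactly by Theorem~\ref{thm:P(L = k)}; since the total mass is $1$ both before and in the limit, the probability of the remaining sliver $1-\delta \le S/n < 1$ is then forced, by subtraction, to converge to $\int_{1-\delta}^1 f$. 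Letting $\delta \to 0$ and using that the limiting distribution function is continuous on $[1/2,1)$ with its only jump at $1$, convergence of $\Pb_n(S/n \le \theta_0)$ at every continuity point follows, which is precisely the asserted convergence in distribution.
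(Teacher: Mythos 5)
Your proposal is correct and follows essentially the same route as the paper: the paper likewise sums the local estimate of Lemma~\ref{lem:P_n(S = m)} over $\cl{n/2} \le m \le \fl{\sigma n}$ (so that $O(n)$ terms with error $O(n^{-1-\varepsilon})$ yield a Riemann sum converging to $\int_{1/2}^{\sigma} f(\theta)\,d\theta$, with the error uniform in $\sigma$ over sets of the form $[1/2, 1-\delta)$), concludes convergence of the distribution function at every continuity point, and identifies the atom at $\theta = 1$ with $\Pb(\mc{L} = 1)$ from Theorem~\ref{thm:P(L = k)}. Your additional bookkeeping---the test-function formulation, the subtraction argument for the sliver near $1$, and the verification that $\int_{1/2}^1 f(\theta)\,d\theta + \Pb(\mc{S} = 1) = 1$---is sound but goes slightly beyond what is needed, since convergence of the distribution functions at the continuity points in $[1/2,1)$ already suffices.
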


\begin{proof}
Consider the cumulative distribution function arising from
Lemma~\ref{lem:P_n(S = m)}:
\begin{align*}
    \Pb_n(S \le \sigma n) &= \frac{4}{n} \frac{\alpha}{\alpha-1}
        \sum_{m=\cl{n/2}}^{\fl{\sigma n}} \lr*{\alpha
        \lr*{\frac{n}{2m}}^{\alpha+1} - \lr*{\frac{n}{2m}}^2}
      + O\lr*{n^{-\varepsilon}} \\*
  &= 4 \frac{\alpha}{\alpha-1} \int_{1/2}^\sigma
        \lr*{\alpha 2\theta)^{-(\alpha+1)} - (2\theta)^{-2}}\, d\theta
      + O\lr*{n^{-\varepsilon}}.
\end{align*}
Note that the error term---which traces back to
Theorem~\ref{thm:P_n(Lambda_k(sigma))} via Lemma~\ref{lem:A_m(n/2m)}---is
uniform in~$\sigma$ over subsets of the form $[1/2, 1 - \delta)$. And since each
point of continuity (there is discontinuity at~$1$) is contained in such a
subset, this makes explicit (for $\sigma < 1$) the convergence of $\Pb_n(S \le
\sigma n)$ to the continuous distribution with the stated density. The point
measure simply corresponds to $\Pb(\mc{L} = 1)$. \end{proof}

Once again, the result for recursive trees differs slightly:
\begin{corollary}\label{cor:P(S = m)}
For recursive trees:
\begin{equation*} f(\theta) = \frac{1 - \ln(2\theta)}{\theta^2} \text{ on $[\tfrac12,1)$}\quad\text{and}\quad
    \Pb(\mc{S} = 1) = 1-\ln2. \end{equation*}
\end{corollary}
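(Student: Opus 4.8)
The plan is to treat this as the singular case $\alpha = 1$ of Theorem~\ref{thm:P(S = theta)} and to recover it as the limit $\alpha \to 1$ of the general formulas, following the convention stated after Theorem~\ref{thm:P(D = h)}. At $\alpha = 1$ both the density and the point mass are indeterminate forms of type $0/0$: the prefactor $\alpha/(\alpha-1)$ diverges while the accompanying factor vanishes, so in each case a first-order expansion in $\alpha - 1$ should cancel the singularity.

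For the density I would set $u = 2\theta$ and write $u^{-(\alpha+1)} = u^{-2}\exp\lr*{-(\alpha-1)\ln u}$, whose expansion $u^{-2}\lr*{1 - (\alpha-1)\ln u + O((\alpha-1)^2)}$ gives
\begin{equation*}
    \alpha u^{-(\alpha+1)} - u^{-2}
  = (\alpha-1)\, u^{-2}(1 - \ln u) + O\lr*{(\alpha-1)^2}.
\end{equation*}
Multiplying by $4\alpha/(\alpha-1)$ and letting $\alpha \to 1$ leaves $4u^{-2}(1 - \ln u) = (1 - \ln(2\theta))/\theta^2$, as claimed. The point mass is handled identically: expanding $2^{-(\alpha-1)} = 1 - (\alpha-1)\ln2 + O((\alpha-1)^2)$ turns $\frac{\alpha}{\alpha-1}\lr*{1 - 2^{-(\alpha-1)}}$ into $(1 + (\alpha-1))\ln2 + O(\alpha-1)$, which tends to $\ln2$, so that $\Pb(\mc{S} = 1) \to 1 - \ln2$.

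As a rigorous alternative (and a check), I would instead rerun the derivation directly at $\alpha = 1$: Corollary~\ref{cor:P_n(X_m)} gives $\Pb_n(X_m) = n/(m(m+1))$, and the recursive-tree corollary immediately following Lemma~\ref{lem:A_m(n/2m)} gives $A_m(n/(2m)) \sim \ln(2m/n)$; together with $m = \fl{\theta n}$ these yield $\Pb_n(S = m) \sim (n/m^2)(1 - \ln(2\theta))$, and rescaling by $n$ recovers the same density. The computations themselves are routine Taylor expansions, so there is no deep obstacle; the only genuinely delicate point is the same one already underlying Theorem~\ref{thm:P(S = theta)}, namely justifying that the pointwise limit of the rescaled mass functions really describes the limiting distribution, with the discontinuity at $\theta = 1$ absorbed into the point measure $\Pb(\mc{S} = 1) = \Pb(\mc{L} = 1)$.
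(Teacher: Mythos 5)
Your proposal is correct and follows exactly the route the paper intends: the paper gives no separate proof of this corollary, relying instead on its stated convention that singular cases like $\alpha = 1$ ``can be derived by direct calculations or as limits of the more general cases,'' and you carry out both --- the Taylor expansions in $\alpha - 1$ recovering $f(\theta) = (1-\ln(2\theta))/\theta^2$ and $\Pb(\mc{S}=1) = 1 - \ln 2$, and the direct rerun at $\alpha = 1$ via $\Pb_n(X_m) = n/(m(m+1))$ together with $A_m(n/(2m)) \sim \ln(2m/n)$. You also correctly flag that the direct derivation is the rigorous one (the limit $\alpha \to 1$ of the general formulas being a formal check, since the general statement of Lemma~\ref{lem:P_n(S = m)} is undefined at $\alpha = 1$), and that the point mass is just $\Pb(\mc{L}=1)$ from Corollary~\ref{cor:P(L = k)}, matching the paper's remark.
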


\subsection{Moments of the subtree's size distribution} 

Finally, we can detail the limiting behaviour of the moments of $S(T)$, and in
particular, the expected size of the centroid's subtree. This is simply
mechanical, since our random variables have bounded support, so that convergence
in distribution implies convergence of moments.

\begin{theorem}
The moments of the distribution of the proportion $S(\mc{T}_n)/n$ of the tree
accounted for by the centroid and its descendants converge to those of $\mc{S}$,
i.e.:
\begin{equation*} \lim_{n\to\infty} \Ex_n((S/n)^r) = \Ex(\mc{S}^r). \end{equation*}
The limit of the $r$th moment satisfies:
\begin{equation*} \Ex(\mc{S}^r) = \Pb(\mc{S} = 1) + \frac{\alpha}{\alpha-1}
    \lr*{\frac{\alpha}{\alpha-r} \lr*{2^{-(r-1)} - 2^{-(\alpha-1)}}
  - \frac{1}{r-1} \lr*{1 - 2^{-(r-1)}}}. \end{equation*}
In particular, the limit of its mean is:
\begin{equation*} \Ex(\mc{S}) = 1 + \frac{\alpha}{(\alpha-1)^2}
    \lr*{1 - 2^{-(\alpha-1)} - (\alpha-1) \ln2}. \end{equation*}
\end{theorem}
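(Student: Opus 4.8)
The plan is to prove the final theorem, which asserts convergence of moments for $S(\mc{T}_n)/n$ and provides an explicit formula for $\Ex(\mc{S}^r)$.

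Convergence of the moments will follow with essentially no work from the convergence in distribution already established in Theorem~\ref{thm:P(S = theta)}. For every $n$ the random variable $S(\mc{T}_n)/n$ takes values in the compact interval $[1/2,1]$ (the centroid's subtree always has between $\cl{n/2}$ and $n$ nodes), and the test function $\theta \mapsto \theta^r$ is bounded and continuous there. Since weak convergence means precisely that $\Ex_n(g(S/n)) \to \Ex(g(\mc{S}))$ for every bounded continuous $g$, taking $g(\theta)=\theta^r$ gives $\Ex_n((S/n)^r) \to \Ex(\mc{S}^r)$ directly. Unlike the depth and label, whose distributions had unbounded support and so required a dominated-convergence argument (Lemma~\ref{lem:E(D) convergence}), here no such device is needed, as the statement itself notes.

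Second, I would compute $\Ex(\mc{S}^r)$ by integrating the $r$th power against the law of $\mc{S}$ from Theorem~\ref{thm:P(S = theta)}, i.e.\ the density $f$ on $[1/2,1)$ together with the atom at $\theta=1$:
\begin{equation*}
  \Ex(\mc{S}^r) = \Pb(\mc{S} = 1) + \int_{1/2}^1 \theta^r f(\theta)\, d\theta.
\end{equation*}
Substituting $f(\theta) = 4\frac{\alpha}{\alpha-1}(\alpha(2\theta)^{-(\alpha+1)} - (2\theta)^{-2})$ and pulling the powers of two to the front, the integral splits into the two elementary pieces $\int_{1/2}^1 \theta^{r-\alpha-1}\, d\theta$ and $\int_{1/2}^1 \theta^{r-2}\, d\theta$, each a plain power integral over an interval bounded away from the origin. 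Evaluating these and absorbing the constants $4 \cdot 2^{-(\alpha+1)} = 2^{-(\alpha-1)}$ and $4 \cdot 2^{-2} = 1$ leaves a single algebraic simplification worth flagging, namely $2^{-(\alpha-1)}(1 - 2^{-(r-\alpha)}) = 2^{-(\alpha-1)} - 2^{-(r-1)}$, after which the first contribution becomes $\frac{\alpha}{\alpha-r}(2^{-(r-1)} - 2^{-(\alpha-1)})$ and the claimed closed form falls out. (A quick sanity check: at $r=0$ the integral returns $\frac{\alpha}{\alpha-1}(1-2^{-(\alpha-1)})=1-\Pb(\mc{S}=1)$, as it must.)

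The only genuinely delicate point is the mean, read off at $r=1$: there the factor $\frac{1}{r-1}$ is singular, but $1-2^{-(r-1)}$ vanishes, so I would pass to the limit using $\lim_{r\to 1}\frac{1 - 2^{-(r-1)}}{r-1} = \ln 2$ (immediate by L'Hôpital or a Taylor expansion of $2^{-(r-1)}=e^{-(r-1)\ln 2}$). Substituting this back together with $\Pb(\mc{S} = 1) = 1 - \frac{\alpha}{\alpha-1}(1 - 2^{-(\alpha-1)})$, the two terms carrying the factor $1-2^{-(\alpha-1)}$ combine via $-\frac{\alpha}{\alpha-1} + \frac{\alpha^2}{(\alpha-1)^2} = \frac{\alpha}{(\alpha-1)^2}$ to give $\Ex(\mc{S}) = 1 + \frac{\alpha}{(\alpha-1)^2}(1 - 2^{-(\alpha-1)} - (\alpha-1)\ln 2)$. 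In keeping with the convention stated after Theorem~\ref{thm:P(D = h)}, the remaining singular instances---$\alpha=1$ (recursive trees) and the coincidence $r=\alpha$, where the first power integral yields a logarithm rather than $\frac{1}{r-\alpha}$---are handled as limits of this general expression rather than by separate computation.
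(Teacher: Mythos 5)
Your proposal is correct and takes essentially the same route as the paper: convergence of moments is deduced from the convergence in distribution of Theorem~\ref{thm:P(S = theta)} together with the bounded support $[1/2,1]$, and $\Ex(\mc{S}^r)$ is computed as $\Pb(\mc{S}=1)+\int_{1/2}^1\theta^r f(\theta)\,d\theta$, with the singular cases $r=1$ (the mean) and $r=\alpha$ handled as limits of the general closed form, exactly as the paper's remarks prescribe. The only cosmetic difference is computational---the paper evaluates the integral after substituting $\mu=2\theta$, whereas you split it directly into the two power integrals---and your algebra and the $r=0$ sanity check are correct.
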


\begin{proof}
For $\alpha \notin \{1, r\}$ and $r \in \mb{Z}_{>0}$, and with $f(\theta)$ as in
Theorem~\ref{thm:P(S = theta)}, we have:
\begin{align*}
  &\Ex(\mc{S}^r)
  = \Pb(\mc{S} = 1) + \int_{1/2}^1 \theta^r f(\theta)\, d\theta \\
  &= \Pb(\mc{S} = 1) + 2^{-(r-1)} \frac{\alpha}{\alpha-1}
        \int_1^2 \mu^r \lr*{\alpha \mu^{-(\alpha+1)} - \mu^{-2}}\, d\mu \\
  &= \Pb(\mc{S} = 1) + 2^{-(r-1)} \frac{\alpha}{\alpha-1}
        \lrb*{-\frac{\alpha}{\alpha-r} \mu^{r-\alpha} - \frac{1}{r-1}
        \mu^{r-1}}_1^2 \\
  &= \Pb(\mc{S} = 1) + \frac{\alpha}{\alpha-1} \lr*{\frac{\alpha}{\alpha-r}
        \lr*{2^{-(r-1)} - 2^{-(\alpha-1)}} - \frac{1}{r-1}
        \lr*{1 - 2^{-(r-1)}}}. \qedhere
\end{align*}
\end{proof}

For plane-oriented and binary increasing trees ($\alpha = 1/2$ and
$\alpha = 2$), this yields means of $3 - 2\sqrt{2} + \ln2 \approx 0.86$ and
$2 - 2\ln2 \approx 0.61$ respectively. It is worth noting that the proof's
requirement that $\alpha \ne r$ is weak for two reasons: because the standard
definition of very simple increasing trees deals with the range $0 < \alpha \le
2$, and because singular cases such as these can be seen as limits of the above
function, or, in the case of recursive trees, be derived directly from
Corollaries~\ref{cor:P(L = k)} and~\ref{cor:P(S = m)}. For instance:
\begin{gather*}
    \Ex(\mc{S})|_{\alpha=1} = 1 - \frac12 (\ln2)^2 \approx 0.76, \\
    \Ex(\mc{S}^r)|_{\alpha=1} = 1 + \frac{r}{(r-1)^2} \lr*{1 - 2^{-(r-1)}}
      - \frac{r}{r-1} \ln2, \\
    \Ex\lr{\mc{S}^2}|_{\alpha=2} = 2\ln2 - 1.
\end{gather*}
The limit of the mean in the case of recursive trees was given by
\citet{moon02centroid}. The asymptotic variances for plane-oriented, recursive,
and binary increasing trees are approximately~$0.03$, $0.04$, and~$0.01$
respectively.

\section{Concluding remarks} 
\label{sec:conclusion}

The behaviour of the (nearest) centroid in a large, random very simple
increasing tree is now reasonably clear, and in fact quite consistent across the
entire subclass: one expects the centroid to lie, on average, within two edges
of the root (for the usual case $0 < \alpha \le 2$), and its root branch to
account for a significant portion of the entire tree.

That being said, there are still a number of related questions that could be
raised and investigated: for example, we might consider other parameters of the
centroid---in the simplest case, its degree---or ask to what extent the above
behaviour generalises to the entire class of increasing trees, which do not
necessarily satisfy the properties of Lemma~\ref{lem:panholzer07}.

Perhaps more interestingly, one could attempt to characterise the distribution
of the average distance from a node to the other nodes in a tree---the closeness
centrality---in a way similar to that which has been done for betweenness
centrality~\citep{durant17betweenness}; or, as an alternative definition of a
tree's most `central' node, consider its central points (i.e., nodes with
minimal eccentricity). Finally, there are classes of random trees other than
simply generated and increasing trees that we have not mentioned at all
here---the most notable being the various families of search trees
\cite[Section~1.4]{drmota09random}. One would expect (or at least hope) that
several of these problems will also be amenable to the tools and methods of
analytic combinatorics.


\end{document}